\declaretheorem[numberwithin=section]{theorem}
\declaretheorem[numbered=no,name=Theorem]{theorem*}
\declaretheorem[numberlike=theorem]{lemma}
\declaretheorem[numberlike=theorem]{corollary}
\declaretheorem[numberlike=theorem]{proposition}
\declaretheorem[style=definition,numberlike=theorem]{definition}
\declaretheorem[style=remark,numberlike=theorem]{remark}
\newcommand{\engel}{{\operatorname{\mathbf{E}}\nolimits}}
\newcommand{\cartan}{{\operatorname{\mathbf{C}}\nolimits}}
\newcommand{\optimalpreimage}{\widehat{N}}
\newcommand{\preimagedomain}{\widetilde{N}}
\newcommand{\imagedomain}{\widetilde{\cartan}}
\newcommand{\identity}{\mathrm{Id}}
\def\tcutc{\tcut^\cartan}
\def\tcute{\tcut^\engel}
\newcommand{\qzero}{\identity}
\newcommand{\qone}{\mathbf{q}}
\newcommand{\Exp}{\operatorname{Exp}\nolimits}
\newcommand{\MAX}{\operatorname{MAX}\nolimits}
\newcommand{\FIX}{\operatorname{FIX}\nolimits}
\newcommand{\Cut}{\operatorname{Cut}\nolimits}
\def\tmax{t_{\MAX}^1}
\def\tcut{t_{\operatorname{cut}}}
\def\tconj{t_{\operatorname{conj}}^1}
\newcommand{\Sym}{\mathbf{S}}
\newcommand{\Id}{\operatorname{Id}\nolimits}
\def\SE{\mathrm{SE}}
\def\SH{\mathrm{SH}}
\def\SO{\mathrm{SO}}
\def\SL{\mathrm{SL}}
\def\H{\mathrm{H}}
\newcommand{\quotientexp}{\widetilde{\Exp}}
\newcommand{\bpiE}{\bar{\pi}_{\engel}}
\newcommand{\E}{\operatorname{E}\nolimits}
\newcommand{\RR}{\mathbb{R}}
\newcommand{\NN}{\mathbb{N}}
\DeclarePairedDelimiter{\abs}{\lvert}{\rvert}
\DeclarePairedDelimiter{\norm}{\lVert}{\rVert}
\newcommand{\sn}{\operatorname{sn}}
\newcommand{\cn}{\operatorname{cn}}
\newcommand{\dn}{\operatorname{dn}}
\newcommand{\Ad}{\operatorname{Ad}}
\let\oldabs\abs
\def\abs{\@ifstar{\oldabs}{\oldabs*}}
\let\oldnorm\norm
\def\norm{\@ifstar{\oldnorm}{\oldnorm*}}
\def\@tocline#1#2#3#4#5#6#7{\relax
	\ifnum #1>\c@tocdepth 
	\else
	\par \addpenalty\@secpenalty\addvspace{#2}%
	\begingroup \hyphenpenalty\@M
	\@ifempty{#4}{%
		\@tempdima\csname r@tocindent\number#1\endcsname\relax
	}{%
		\@tempdima#4\relax
	}%
	\parindent\z@ \leftskip#3\relax \advance\leftskip\@tempdima\relax
	\rightskip\@pnumwidth plus4em \parfillskip-\@pnumwidth
	#5\leavevmode\hskip-\@tempdima
	\ifcase #1
	\or\or \hskip 1em \or \hskip 2em \else \hskip 3em \fi%
	#6\nobreak\relax
	\hfill\hbox to\@pnumwidth{\@tocpagenum{#7}}\par
	\nobreak
	\endgroup
	\fi}
\title[Cut time in the sub-Riemannian Cartan group]{Cut time in the sub-Riemannian problem on the Cartan group}
\author{Andrei Ardentov}
\address[Ardentov]{Ailamazyan Program Systems Institute, Russian Academy of Sciences, Pereslavl-Zalessky, Russia}
\email{aaa@pereslavl.ru}
\author{Eero Hakavuori}
\address[Hakavuori]{SISSA, Via Bonomea 265, 34136 Trieste, Italy}
\email{eero.hakavuori@sissa.it}
\date{July 14, 2021}
\subjclass[2010]{%
	22E25, 
	49K15, 
	53C17. 
}
\keywords{Cartan group, sub-Riemannian problem, nilpotent approximation, Carnot groups, Euler elastica, optimal synthesis}
\begin{document}
\begin{abstract}
	We study the sub-Riemannian structure determined by a left-invariant distribution of rank 2 on a step 3 Carnot group of dimension 5.
	We prove the conjectured cut times of Y. Sachkov for the sub-Riemannian Cartan problem.
	Along the proof, we obtain a comparison with the known cut times in the sub-Riemannian Engel group, and a sufficient (generic) condition for the uniqueness of the length minimizer between two points. Hence we reduce the optimal synthesis to solving a certain system of equations in elliptic functions.
\end{abstract}
	
\maketitle
\tableofcontents

\section{Introduction}
\subsection{Background}
The sub-Riemannian Cartan group $\cartan$ is the nilpotent model for all sub-Riemannian problems with growth vector (2,3,5). 
As a consequence of the non-integrability results of \cite{BBKM:2016:nonintegrability-rank3-step3} and \cite{LokutsievskiiSachkov:2018:liouville-step4}, it is the only free nilpotent group with step three or greater for which the Hamiltonian system of the Pontryagin Maximum Principle (PMP) \cite{PBGM} is Liouville integrable.

A geometric description of the optimal control problem in the sub-Riemannian Cartan group can be given in terms of the generalized (dual) Dido problem: Given two points $a,b\in\RR^2$ and a fixed ``shoreline'', i.e., a~curve $\bar{\gamma}$ connecting $b$ and $a$, fix a desired oriented area $S\in\RR$ and a desired center of mass $c\in\RR^2$, see Fig.~\ref{geom-statement}. The problem is to find the shortest curve $\gamma$ connecting $a$ and $b$ such that the region of the plane bounded by the curves $\bar{\gamma}$ and $\gamma$ has area equal to $S$ and center of mass $c$.

\begin{figure}[tbhp]
\includegraphics[width=0.55\textwidth]{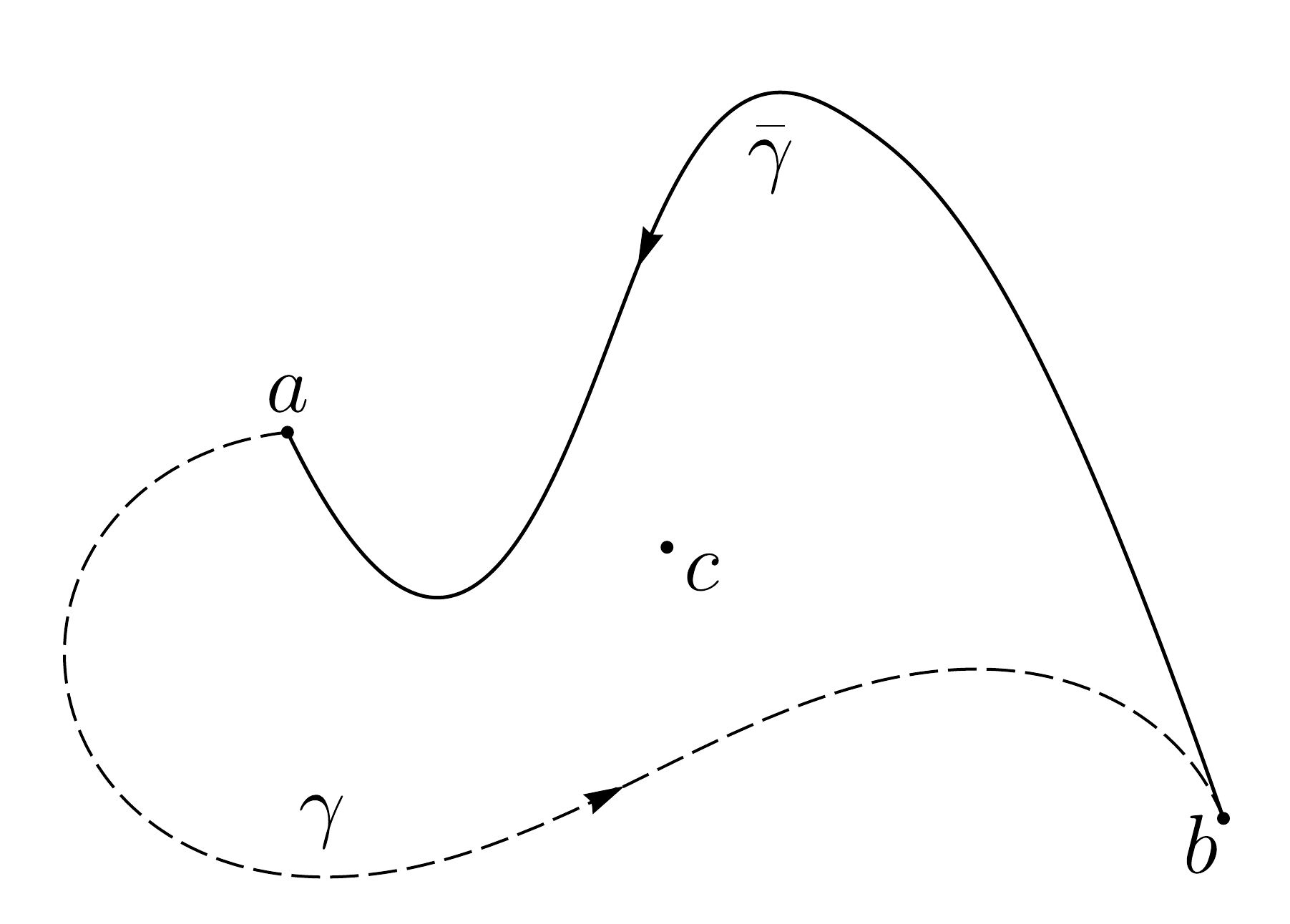}
\caption{Geometric formulation of the problem}
\label{geom-statement}
\end{figure}

In this paper, we consider the equivalent simplified problem where $\bar{\gamma}$ is the straight line connecting $b$ and $a$. 

A closely related problem is that of optimal control in the sub-Riemannian Engel group $\engel$ with the growth vector $(2,3,4)$. The geometric description of the Engel problem is the same as in the Cartan case, except instead of fixing the center of mass, we fix a line on which the center of mass should lie.

In both the Cartan and Engel groups, application of the PMP leads to a complete description of the geodesics. This is due to the fact that all the (injective) abnormal trajectories are straight lines, so there are no strictly abnormal trajectories. The normal extremal trajectories in both the Cartan and Engel cases project to Euler elasticae in the plane. Conversely, every elastica lifts to an extremal trajectory in the Cartan group and, if suitably rotated, also in the Engel group \cite{Sachkov:2003:exp_map,ArdentovSachkov:2011:ExtremalEngel}.

A key part of the optimal synthesis is to understand when each extremal trajectory loses its global optimality, i.e., to understand the cut times. In this paper, we consider all extremal trajectories to be parametrized by arc length.

\begin{definition}
	Let $q\colon [0,\infty)\to M$ be an arc length parametrized extremal trajectory in a sub-Riemannian manifold $M$. The \emph{cut time} $\tcut(q)$ is the maximal time $T$ such that $q\colon [0,T]\to M$ is a minimizing geodesic.
\end{definition}

When $M$ is the Cartan group $\cartan$ or the Engel group $\engel$, we will also use the notation $\tcutc(\gamma)$ or $\tcute(\gamma)$, where the elastica $\gamma\colon [0,\infty)\to\RR^2$ is the projection of a Cartan or Engel extremal trajectory.

The Euler elasticae can be inflectional, non-inflectional, critical, or straight lines. 
For both the Cartan and Engel problems, straight lines and critical elasticae are optimal for all time, so the study of the cut time reduces to studying lifts of inflectional and non-inflectional elasticae.

Upper bounds for the cut time are provided by Maxwell times.

\begin{definition}
	Let $q\colon [0,\infty)\to M$ be an extremal trajectory in a sub-Riemannian manifold $M$.
	A point $q(t)$ is called a \emph{Maxwell point} if there exists another extremal trajectory $\tilde{q}\neq q$ such that $\tilde{q}(t) = q(t)$. The instant $t$ is called a \emph{Maxwell time}.
\end{definition}

In the Engel case $M=\engel$, the cut times are solved in \cite{ArdentovSachkov:2015:CutEngel}. A key ingredient to identify the cut times is the description of a discrete group of symmetries and their Maxwell times \cite{ArdentovSachkov:2011:ExtremalEngel}.

For the Cartan case $M=\cartan$, the analogous discrete dihedral group of symmetries is described in~\cite{Sachkov:2006:dido_discrete_symmetries}. This group of symmetries is generated by a symmetry $\varepsilon^1$, which reflects an elastica in the center of its chord, and a symmetry $\varepsilon^2$, which reflects an elastica in the perpendicular bisector to the chord (up to an additional rotation).
A detailed description of the Maxwell times corresponding to these symmetries is given in \cite{Sachkov:2006:dido_complete_description_of_maxwell_strata}. Based on numerical evidence, it was also conjectured that these times are in fact the first Maxwell times.

In the Engel case, a similar cut time conjecture was proved in two steps:
\begin{enumerate}[1)]
	\item Study the first conjugate times \cite{ArdentovSachkov:2013:ConjugateEngel}.
	\item Prove uniqueness of the geodesics connecting the initial point with every point before the obtained Maxwell point (or the first conjugate point) \cite{ArdentovSachkov:2015:CutEngel}.
\end{enumerate}
In the Cartan case, we use the same steps to validate the conjectured cut times.
For the first step, bounds for the conjugate times have been obtained recently in \cite{Sachkov:2021:Cartan_conj_time}.
Our main goal is to complete the second step and hence obtain the cut times in the Cartan case.

\subsection{Main results}

\begin{theorem}\label{theorem-unique-minimizer-region}
	In the generalized Dido problem, if the desired enclosed area $S$ is nonzero and the center of mass does not lie on the perpendicular bisector to the line segment from $a$ to $b$,
	then there exists a unique minimizer connecting the points $a$ and $b$.
\end{theorem}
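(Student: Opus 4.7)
The plan is to translate the Dido uniqueness question into a question about uniqueness of sub-Riemannian geodesics in the Cartan group $\cartan$, and then rule out alternative minimizers by means of the Maxwell strata classified in \cite{Sachkov:2006:dido_complete_description_of_maxwell_strata}.

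First, I would recall that a Dido datum $(a, b, S, c)$ corresponds bijectively to a pair of points $q_0, q_1 \in \cartan$: two coordinates of $q_i$ encode the plane points $a, b$, and the remaining three coordinates encode $S$ and the center-of-mass vector $c$. A plane curve from $a$ to $b$ satisfying the prescribed Dido constraints lifts uniquely to a horizontal curve from $q_0$ to $q_1$ in $\cartan$, with the sub-Riemannian length equal to the Euclidean length of the plane curve. Hence the uniqueness claim is equivalent to the uniqueness of a sub-Riemannian length-minimizer between these $q_0$ and $q_1$.

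Next, suppose for contradiction that two distinct length-minimizers $\gamma_1, \gamma_2$ connect $q_0$ and $q_1$; then their common endpoint is a Maxwell point. Using the main cut-time theorem of the paper (which combines the conjugate-time estimates of \cite{Sachkov:2021:Cartan_conj_time} with Sachkov's description of the Maxwell strata), the first cut-producing Maxwell time on any nondegenerate extremal trajectory is realized by one of the nontrivial elements of the dihedral symmetry group generated by $\varepsilon^1$ and $\varepsilon^2$ from \cite{Sachkov:2006:dido_discrete_symmetries}. Therefore $\gamma_2 = \varepsilon^i \gamma_1$ for some nontrivial $\varepsilon^i \in \{\varepsilon^1, \varepsilon^2, \varepsilon^3\}$, where $\varepsilon^3 = \varepsilon^1 \circ \varepsilon^2$.

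The decisive step is a geometric dictionary linking each symmetry to an algebraic condition on $(S, c)$. Working in coordinates adapted to the chord $[a, b]$ and using the action of each $\varepsilon^i$ on the fibers of the projection $\cartan \to \RR^2$, I would verify that coincidence under $\varepsilon^2$ (reflection across the perpendicular bisector of $[a, b]$) forces $c$ to be fixed by this reflection and hence to lie on the perpendicular bisector; that coincidence under $\varepsilon^1$ (point reflection through the midpoint of $[a, b]$, together with the associated fiber transformation) forces $S = -S$, hence $S = 0$; and that coincidence under $\varepsilon^3$ forces both conditions. Under the hypotheses $S \neq 0$ and $c$ off the perpendicular bisector, all three cases are excluded, giving the desired contradiction and yielding uniqueness.

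The main obstacle is this dictionary step: one must explicitly write the action of each $\varepsilon^i$ on the coordinates of $\cartan$ encoding $S$ and $c$, and confirm the transformations described above. The computation is mechanical given the explicit formulae in \cite{Sachkov:2006:dido_discrete_symmetries}, but requires care with signs and orientations of the signed area and first moments of the region bounded by $\gamma$ and the chord.
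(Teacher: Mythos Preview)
Your argument is circular. You invoke the ``main cut-time theorem of the paper'' (Theorem~\ref{theorem-cuttime-is-maxwell-time}, equivalently Theorem~\ref{tcut-is-tbold}) to deduce that any Maxwell point at the cut time arises from one of the symmetries $\varepsilon^i$. But in the paper's logical order that cut-time theorem is proved \emph{after} and \emph{using} Theorem~\ref{thm:exp-diffeo}, which is precisely the coordinate form of the uniqueness statement you are trying to establish. What was known \emph{a priori} from \cite{Sachkov:2006:dido_complete_description_of_maxwell_strata} and \cite{Sachkov:2021:Cartan_conj_time} is only the upper bound $\tcut^\cartan\le \mathbf t$ and the conjugate-time bound $\mathbf t\le t_{\mathrm{conj}}^1$; the matching lower bound $\tcut^\cartan\ge \mathbf t$---and hence the claim that the first Maxwell time is symmetric---is exactly what the present paper supplies, via the properness argument of Section~\ref{section-properness} and the Hadamard global diffeomorphism theorem. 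So your step ``$\gamma_2=\varepsilon^i\gamma_1$'' is not available at this stage.

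There is a second gap even if one grants the cut-time theorem. Knowing that the \emph{first} Maxwell time of $\gamma_1$ is realized by some symmetric partner does not imply that an arbitrary second minimizer $\gamma_2$ to the same endpoint is itself $\varepsilon^i\gamma_1$; a priori there could be several minimizers, only one of which is symmetry-related to $\gamma_1$. The clean way to finish (again, once $\tcut^\cartan=\mathbf t$ is available) is not to identify $\gamma_2$ at all, but to note that the existence of any second minimizer forces $T=\mathbf t(\lambda_1)$, hence $(\lambda_1,T)\in\MAX\cup\FIX$, and then invoke Lemma~\ref{lemma:inclusions} to conclude $q_1\in\cartan'=\{zV=0\}$, contradicting the hypothesis. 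This is essentially the content of your ``dictionary'' step, but it is already packaged in Lemma~\ref{lemma:inclusions} and does not require tracking which $\varepsilon^i$ is responsible. The paper instead proves uniqueness directly by showing $\Exp^\cartan\colon\preimagedomain\to\imagedomain$ is a diffeomorphism (properness plus nondegeneracy plus simple connectedness of the rotation quotient), and only afterwards deduces the cut-time formula.
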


In \cite{ArdentovSachkov:2015:CutEngel}, the analogous result is proved by a case-by-case study of an explicit parametrization of the geodesics. In the Cartan case, we are able to give a sufficient reduction of the technical analysis so that one case follows from continuity of the parametrization (see \cref{lemma:abnormal limit}), and the most difficult case can be obtained using the results of \cite{ArdentovSachkov:2015:CutEngel} for the Engel group (see \cref{lemma-unbdd-time}).
See also \cref{theorem-unique-minimizer-region-coords} for an alternate description of \cref{theorem-unique-minimizer-region} using coordinates on the Cartan group.

Consequently we verify the conjectured cut time of \cite{Sachkov:2006:dido_complete_description_of_maxwell_strata} in \cref{tcut-is-tbold}. That is, we obtain the following.

\begin{theorem}\label{theorem-cuttime-is-maxwell-time}
	For every extremal $q$ in the sub-Riemannian Cartan group, the cut time is equal to the first Maxwell time of $q$ or the limit of Maxwell times of extremals $q_n$ converging to $q$.
	Moreover, the first Maxwell time for each extremal is the first Maxwell time corresponding to one of the symmetries $\varepsilon^1$ or $\varepsilon^2$.
\end{theorem}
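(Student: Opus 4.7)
My plan is to combine the symmetry description of Maxwell times from~\cite{Sachkov:2006:dido_complete_description_of_maxwell_strata}, the conjugate time bound of~\cite{Sachkov:2021:Cartan_conj_time}, and the uniqueness result \cref{theorem-unique-minimizer-region}, via a standard compactness argument in the spirit of the Engel case \cite{ArdentovSachkov:2015:CutEngel}. Write $t^{*}(q)$ for the minimum of the first Maxwell times along $q$ coming from the dihedral symmetries $\varepsilon^{1}$ and $\varepsilon^{2}$. The upper bound $\tcut(q)\le t^{*}(q)$ is the standard fact that a minimizer cannot be prolonged past a Maxwell point: concatenating with the competing extremal would produce a minimizer with a corner, contradicting the PMP.

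For the reverse inequality I would argue by contradiction. Assume $t_{0}:=\tcut(q)<t^{*}(q)$ and pick $s_{n}\downarrow t_{0}$. By definition of the cut time, each $q(s_{n})$ admits an arc-length minimizer $\tilde q_{n}$ of length $L_{n}\le s_{n}$ distinct from $q|_{[0,s_{n}]}$. The initial covectors of $\tilde q_{n}$ lie in the unit cosphere, so up to a subsequence they converge to the initial covector of a limit extremal $\bar q$ reaching $q(t_{0})$ in time at most $t_{0}$. A dichotomy then applies: either (a) $\bar q$ has a different initial covector than $q$, so that $q(t_{0})$ is itself a Maxwell point of $q$; or (b) the initial covectors of $\tilde q_{n}$ converge to that of $q$, in which case $q(t_{0})$ is a critical value of the exponential map and therefore $t_{0}\ge \tconj(q)$. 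Case (b) is excluded by the Sachkov 2021 bound $\tconj(q)\ge t^{*}(q)>t_{0}$.

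In case (a) the endpoint $q(t_{0})$ admits two distinct minimizers of length $t_{0}$, so by \cref{theorem-unique-minimizer-region} the corresponding Dido data must sit in the non-generic locus where either the enclosed area vanishes or the center of mass lies on the perpendicular bisector of the chord. These are precisely the geometric conditions characterising fixed configurations of the symmetries $\varepsilon^{1}$ and $\varepsilon^{2}$, so $t_{0}$ is itself a Maxwell time for one of them, yielding $t^{*}(q)\le t_{0}$ and contradicting the assumption. This also proves the \emph{moreover} clause: every first Maxwell time must arise from one of the two symmetries. The ``limit of Maxwell times'' alternative in the statement then absorbs the genuinely degenerate extremals (critical elasticae, abnormal limits like those in \cref{lemma:abnormal limit}, straight lines), where $q$ itself may have no first Maxwell time but appears as the limit of extremals that do, and the cut time passes to the limit by continuity of the exponential map.

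The main obstacle is the geometric identification used in case (a): matching the two non-generic conditions of \cref{theorem-unique-minimizer-region} bijectively with the fixed loci of $\varepsilon^{1}$ and $\varepsilon^{2}$, so that no stray Maxwell point can hide in the non-generic stratum. The rest of the argument is a formal assembly of the compactness principle with the conjugate time estimate and the uniqueness theorem; the genuinely hard analytic work is already done in \cref{theorem-unique-minimizer-region} (and, upstream of it, in \cref{lemma-unbdd-time} which imports the Engel optimality results).
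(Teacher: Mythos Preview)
Your compactness strategy is reasonable, but the gap in case~(a) is more serious than the ``matching'' issue you flag at the end. From \cref{theorem-unique-minimizer-region} you correctly deduce $q(t_0)\in\cartan'$, and by \cref{lemma:inclusions} that $(\lambda,t_0)\in\MAX\cup\FIX$. Since $\mathbf{t}(\lambda)$ is by definition the first time in $\MAX_1\cup\MAX_2$, for $t_0<\mathbf{t}(\lambda)$ the pair $(\lambda,t_0)$ may lie only in $\FIX$. But being a fixed point of a symmetry does \emph{not} make $t_0$ a Maxwell time for that symmetry: the symmetry sends $(\lambda,t_0)$ to itself and produces no second extremal. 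Your competitor $\bar q$ need not be $\varepsilon^1(q)$ or $\varepsilon^2(q)$; nothing prevents it from being an entirely unrelated extremal that also lands in $\cartan'$ (say with $(\bar\lambda,t_0)\in\FIX$ as well). So the conclusion $t^*(q)\le t_0$ does not follow, and \cref{theorem-unique-minimizer-region} alone is not strong enough to close the argument. (A secondary issue: $C^\cartan=\{H=1/2\}$ is not compact, so convergence of the covectors of $\tilde q_n$ requires an abnormal-limit argument as in \cref{lemma:abnormal limit}, not bare sequential compactness.)

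The paper avoids this by not arguing at the cut time. It proves the stronger statement that $\Exp^\cartan\colon\preimagedomain\to\imagedomain$ is a diffeomorphism (\cref{thm:exp-diffeo}), hence injective on \emph{all} of $\preimagedomain$, not merely on minimizers. Using that the $\FIX$ equations $\sn\tau\cn\tau=0$ have isolated zeros in $t$, it picks $t\in(\mathbf{t}(\lambda)-\epsilon,\mathbf{t}(\lambda))$ outside $\FIX$; then $(\lambda,t)\in\preimagedomain$, so $\Exp^\cartan(\lambda,t)\in\imagedomain$, and the diffeomorphism together with \cref{lemma:inclusions} forces $(\lambda,t)$ to be the unique preimage in all of $\optimalpreimage$. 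Since every minimizer lies in $\optimalpreimage$, this gives uniqueness of the minimizer and hence $\tcut(\lambda)\ge\mathbf{t}(\lambda)$. The essential point is that uniqueness of \emph{minimizers} (\cref{theorem-unique-minimizer-region}) is weaker than uniqueness of \emph{preimages in $\optimalpreimage$}, and only the latter excludes stray Maxwell points over the non-generic stratum $\cartan'$.
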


In the case when the cut time is equal to the limit of Maxwell times of nearby extremals, the results of \cite{Sachkov:2021:Cartan_conj_time} imply that the cut time is in fact equal to the first conjugate time.

\begin{definition}
	Let $q\colon [0,\infty)\to M$ be a geodesic in a sub-Riemannian manifold $M$. For any $\varphi\in[0,\infty)$, let $q_\varphi\colon [0,\infty)\to M$ be the geodesic $q_\varphi(t) = q(t+\varphi)$. The geodesic $q$ is called \emph{equioptimal} if $\tcut(q) = \tcut(q_\varphi)$ for all $\varphi\in[0,\infty)$.
	The sub-Riemannian manifold $M$ is called \emph{equioptimal}, if all of its arc length parameterized geodesics are equioptimal.
\end{definition}

As a consequence of the formula for the cut times in the Engel case in \cite{ArdentovSachkov:2015:CutEngel}, the Engel group is equioptimal. The analogous equioptimality result in the Cartan case is given in \cref{Cartan-invariant-properties}.

Every arc of an elastica that is optimal for the Engel problem is also optimal for the Cartan problem, so the Cartan cut times are never smaller than the Engel cut times. We show that, up to a constant factor, there is also a converse bound.

\begin{theorem}\label{theorem-cuttime-comparison}
	There exists a constant $\zeta<2$ such that for any elastica $\gamma\colon [0,\infty)\to\RR^2$ that is rotated so that it lifts to an extremal trajectory for the Engel group, we have
	\begin{equation*}
	\tcut^{\engel}(\gamma)\leq \tcutc(\gamma) \leq \zeta\cdot \tcute(\gamma).
	\end{equation*}
\end{theorem}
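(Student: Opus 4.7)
The plan is to establish the two inequalities of the theorem separately. The lower bound $\tcute(\gamma) \leq \tcutc(\gamma)$ follows from a projection argument. Since the Engel group $\engel$ is a quotient of the Cartan group $\cartan$ by a one-dimensional central subgroup, and the quotient homomorphism preserves the horizontal distribution, it is length-preserving on horizontal lifts of a given plane curve. If $\gamma|_{[0,T]}$ lifts to a minimizing geodesic in $\engel$, any shorter horizontal curve between the same Cartan endpoints would project to a shorter horizontal curve in $\engel$ between the same Engel endpoints, contradicting Engel-minimality. Hence minimality up to time $T$ in $\engel$ forces minimality up to time $T$ in $\cartan$, giving $\tcute \leq \tcutc$.

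For the upper bound, the strategy is to compare the explicit Maxwell-time formulas in the two groups. By \cref{theorem-cuttime-is-maxwell-time}, the Cartan cut time is the minimum of the first Maxwell times $t_1^{\cartan}$ and $t_2^{\cartan}$ corresponding to the symmetries $\varepsilon^1$ and $\varepsilon^2$ (or the limit of such times), whereas the analogous result in the Engel case from \cite{ArdentovSachkov:2015:CutEngel} identifies $\tcute$ with the first Maxwell time $t^{\engel}$ from the single symmetry $\varepsilon^1$. Using the explicit parametrization of extremals via Jacobi elliptic functions from \cite{Sachkov:2006:dido_complete_description_of_maxwell_strata}, these Maxwell times are the first positive zeros of explicit transcendental functions. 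It then suffices to show, case by case on the elastica type, that
\begin{equation*}
\min\bigl(t_1^{\cartan}(\gamma),\,t_2^{\cartan}(\gamma)\bigr) \;\leq\; \zeta\cdot t^{\engel}(\gamma)
\end{equation*}
for some fixed $\zeta<2$ independent of $\gamma$.

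The main obstacle lies in extracting a uniform constant $\zeta<2$, rather than only a pointwise inequality $\tcutc/\tcute < 2$ for each individual elastica. For fixed modulus of the underlying elliptic functions the ratio can be evaluated directly; the substantive work is to control the supremum across all moduli and elastica types. The limiting regimes --- elasticae degenerating to a straight line, to a circle, to the critical (separatrix) elastica, and to a non-inflectional elastica of vanishing winding --- each require a separate asymptotic analysis, and one must verify that in each limit the ratio stays bounded away from $2$. Combining equioptimality (\cref{Cartan-invariant-properties}) and continuity of the elliptic functions reduces the parameter space to a compact set, after which interior estimates together with the limit values yield the strict uniform bound $\zeta<2$.
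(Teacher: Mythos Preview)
Your approach coincides with the paper's: the lower bound is the projection argument (inequality \eqref{tcutC-above-tcutE}), and the upper bound follows by identifying $\tcutc$ with $\mathbf{t}$ via \cref{tcut-is-tbold} and then bounding the normalized ratio $\mathbf{t}/\tcute$ uniformly in the modulus $k$ (this is \cref{lemma:engel cut time comparison}, resting on \cref{lemma:C_1 root bound} and \cref{lemma:C_2 root bound}).

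Two small corrections are worth noting. First, the Engel cut time is not the Maxwell time of a single symmetry $\varepsilon^1$: by \cref{thm:engel-cut} it equals $\min\{1,\mathbf{t}_1^z(k)\}$ on $C_1^\engel$ and the normalized period $1$ on $C_2^\engel\cup C_6^\engel$, and the value $1$ arises from the $\varepsilon^2$-type symmetry. Getting this right is precisely what makes the ratio collapse to $\mathbf{t}_2(k)$ in the non-inflectional case. Second, your compactness-plus-continuity outline does not by itself yield a strict bound $\zeta<2$, since $k$ ranges over the open interval $(0,1)$ and the pointwise bound $\mathbf{t}_2(k)<2$ from \cite{Sachkov:2006:dido_complete_description_of_maxwell_strata} could in principle degenerate as $k\to 1$; the paper closes this gap with the explicit asymptotic estimate \cref{lemma-f2v-limit-as-k-to-1}, which is the one place where new work beyond citing prior bounds is required.
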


See \cref{fig-optimal-family} for a visual description of the longest optimal arcs of inflectional and non-inflectional elasticae starting from the point of minimum absolute curvature. The values $k_1$ and $k_0$ refer to certain critical values of the parametrization of inflectional elasticae, see \cref{subsec-cartan} for details.

\begin{figure}[htbp]
\includegraphics[width=0.99\textwidth]{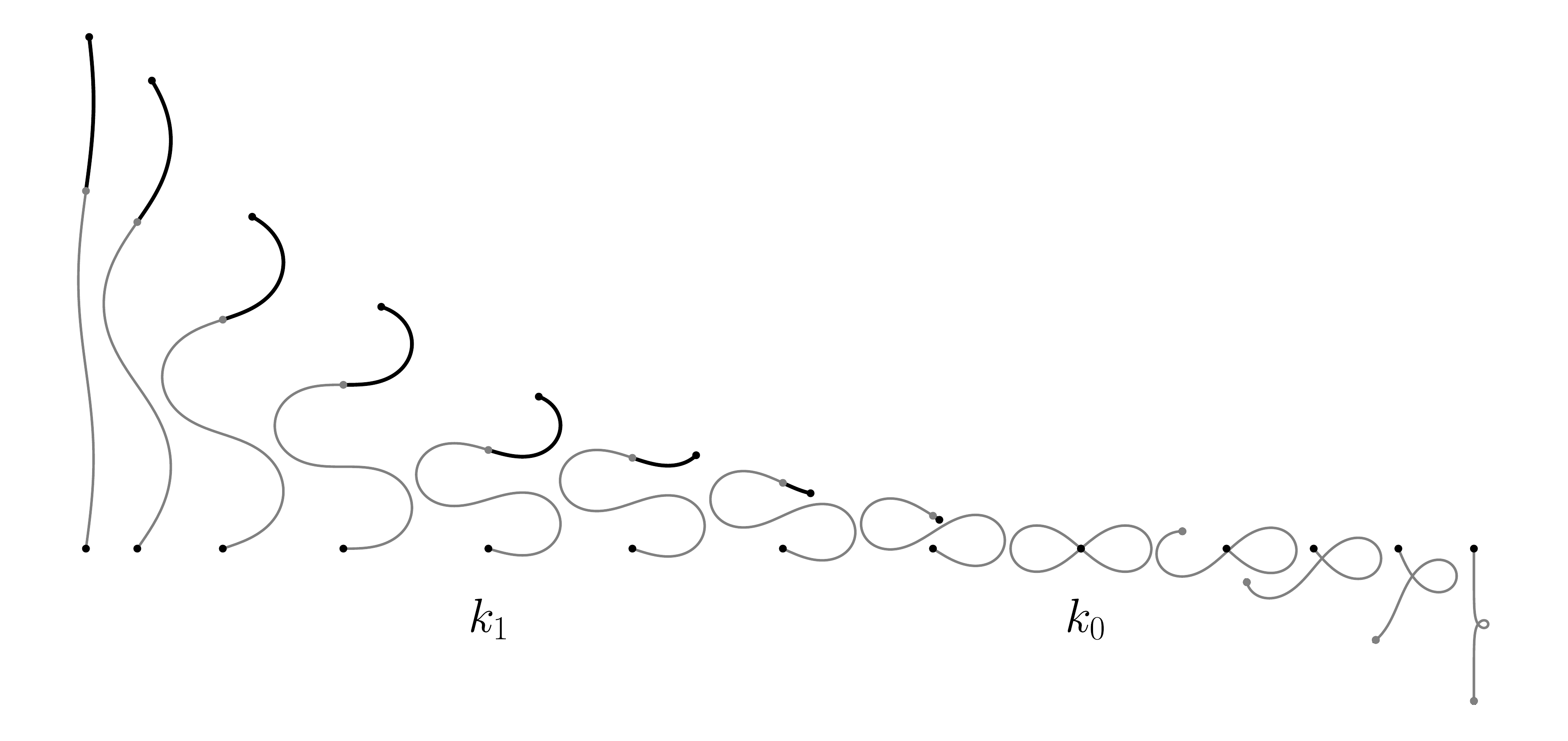}
\includegraphics[width=0.99\textwidth]{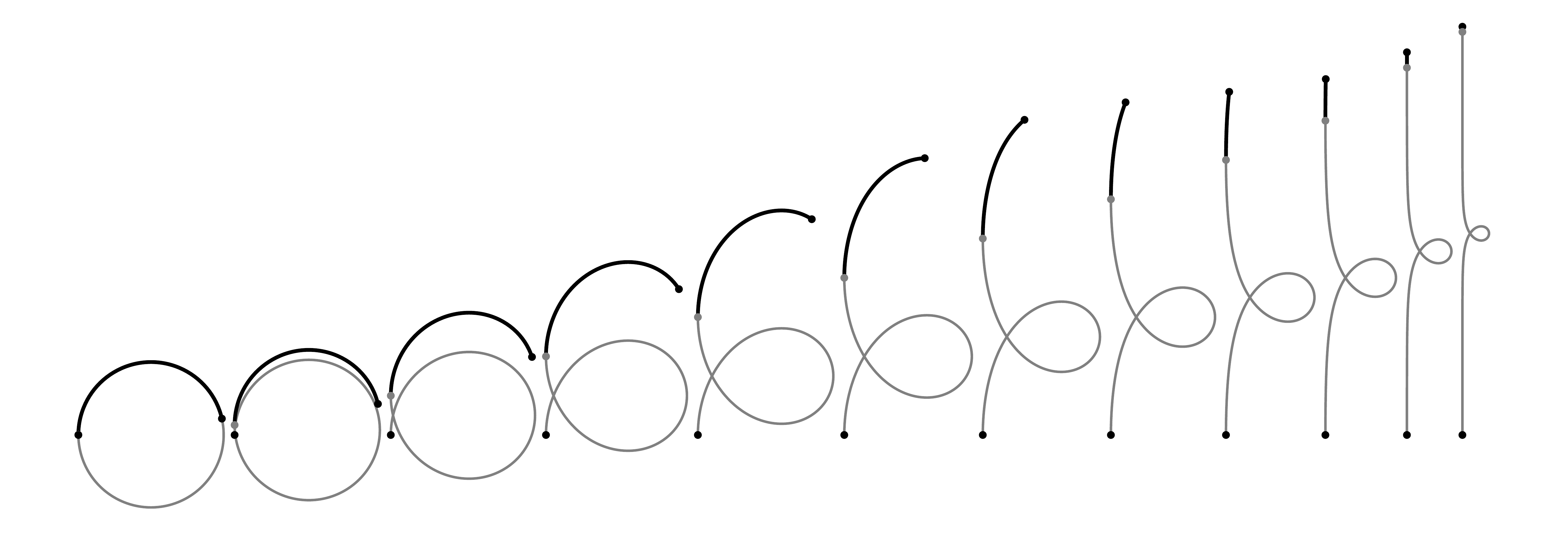}
\caption{Family of longest optimal inflectional and non-inflectional elasticae in the Engel case (gray arcs), and in the Cartan case (unions of gray and black arcs)}
\label{fig-optimal-family}
\end{figure}

\subsection{Structure of the paper}
In \cref{section-known-results}, we cover the basic definitions of sub-Riemannian geometry and cover relevant known results for the Cartan and Engel groups.
In \cref{section-properness}, we prove our most important technical result, \cref{prop-exp-is-proper}, stating that a certain restriction of the sub-Riemannian exponential mapping is proper.
In \cref{section-cut-time}, we obtain our main results and describe properties of the Cartan and Engel cut times along with their visual comparison.
We discuss some related open problems in \cref{section-open-questions}.
In \cref{appendix-formulas}, we give formulas for the first Maxwell times as the roots of certain equations depending on elliptic functions.

\section{Preliminaries}\label{section-known-results}

In this section, we give the formulation of the sub-Riemannian problems on the Cartan and Engel groups and describe previously obtained results for both problems.
\subsection{Optimal control problem}
A left-invariant sub-Riemannian problem on a Lie group $M$ with two-dimensional control $(u_1, u_2)\in\RR^2$ can be formulated as follows:
\begin{align}
\dot{q} &= u_1 X_1 + u_2 X_2, \qquad q\in M,\label{sys}\\
q(0) &=\Id, \qquad q(T)=\qone, \label{points} \\
l(q(\cdot))&=\int_0^T \sqrt{u_1^2 +u_2^2} \, d t \to \min, \label{integralSR}
\end{align}
where the vector fields $X_1$ and $X_2$ on $M$ are left-invariant and generate the Lie algebra of $M$; the terminal time $T$ is not fixed. By left-invariance, there is no loss of generality in assuming that the initial point $q(0)$ is the identity $\identity$.

The solutions of the problem \eqref{sys}--\eqref{integralSR} define the sub-Riemannian distance as $d(\qzero,\qone)=l(q(\cdot))$, where $q$ is the optimal curve connecting $\identity$ with $\qone$.
The optimal control $(u_1(t),u_2(t))$ and the desired trajectory $q(t)$ can have arbitrary time parametrization, so, without loss of generality, we assume that all solutions are parametrized with constant speed $\sqrt{u_1^2+u_2^2} \equiv \text{const}$. By the Cauchy-Schwarz inequality, it follows that the sub-Riemannian length minimization problem~(\ref{integralSR}) is equivalent to the action minimization problem with a fixed terminal time $T$:
\begin{align}
&\int_0^T \frac{u_1^2 +u_2^2}{2} \, d t \to \min. \label{integralE}
\end{align}

As mentioned in the introduction, application of the PMP to problem~(\ref{sys}), (\ref{points}), (\ref{integralE}) leads to a description of the geodesics. When $M$ is the Cartan or the Engel group, all abnormal geodesics are simultaneously normal, so we consider only normal geodesics.  

Normal geodesics are solutions to the Hamiltonian system
\begin{align}
&\dot{\lambda} = \vec{H} (\lambda), \qquad \lambda \in T^* M, \label{Hamsys}
\end{align}
given by the maximized Hamiltonian function 
\begin{equation*}
H(\lambda) = \frac12 \big(h_1^2(\lambda)+h_2^2(\lambda)\big), \qquad h_i(\lambda) = \langle \lambda, X_i \rangle, \ i =1,2,
\end{equation*}
with normal extremal controls $u_i(\lambda) = h_i(\lambda)$.

Arc length parameterized geodesics are projections of extremals $\lambda_t$ with $H(\lambda_t)\equiv 1/2$. The initial cylinder is defined by
\begin{align}
C = \left\{\lambda \in T_{\qzero}^* M \mid H(\lambda)= 1/2 \right\}. \label{cylinder}
\end{align}
Integration of~(\ref{Hamsys}) gives the parametrization of all (arc length parametrized) extremal trajectories, defining the exponential mapping
\begin{align*}
&\Exp \colon N = C \times [0,\infty) \to M,\\
&\Exp (\lambda,t) = q_t.
\end{align*}

\begin{definition}
\emph{The cut time} $\tcut (\lambda)$ is the time when the extremal trajectory corresponding to the covector $\lambda$ loses its global optimality:
\begin{align*}
&\tcut(\lambda) = \sup \{T > 0 \mid \Exp(\lambda, t) \text{ is optimal for } t \in [0, T]\}.
\end{align*}
\end{definition}

\begin{definition}
A point $q_T = \Exp(\lambda, T)$ is called \emph{a conjugate point} for the point $\qzero$ if $\nu = (\lambda, T)$ is a critical point of the exponential mapping.
The instant $T$ is then called \emph{a conjugate time} along the extremal trajectory $q_t = \Exp(\lambda, t), t\in[0,\infty)$.
\end{definition}

\begin{definition}
A point $q_T$ of an extremal trajectory $q_t = \Exp(\lambda, t)$ is called \emph{a Maxwell point} if there exists another extremal trajectory $\tilde{q}_t = \Exp(\tilde{\lambda}, t), q_t \not\equiv \tilde{q}_t$, such that $\tilde{q}_T = q_T$. The instant $T$ is called \emph{a Maxwell time}.
\end{definition}

We denote the first conjugate time by $\tconj(\lambda)>0$ and the first Maxwell time by $\tmax(\lambda)>0$ for the corresponding trajectory $\Exp(\lambda,t)$. 
The significance of the Maxwell and conjugate times is the following result:

\begin{theorem}[Theorem~8.72~\cite{AgrachevBarilariBoscain:2020:SRGeomIntro}]
For each $\lambda \in C$ such that the trajectory $\Exp(\lambda,t)$ does not contain abnormal segments, we have
$$\tcut(\lambda) = \min\big(\tconj(\lambda),\tmax(\lambda)\big).$$
\end{theorem}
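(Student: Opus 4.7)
The plan is to prove the two inequalities $\tcut(\lambda) \le \min(\tconj(\lambda), \tmax(\lambda))$ and the reverse separately. The hypothesis that the trajectory contains no abnormal segments is what ensures that any competing minimizer is itself a normal extremal, i.e., arises from a covector in $C$, so both halves of the argument can be phrased purely in terms of $\Exp$.

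For the upper bound, I would treat the conjugate and Maxwell cases separately. Past a conjugate time, the classical second variation analysis on the endpoint-constrained action yields a negative eigenvalue of the Hessian, producing a smooth variation that strictly shortens the trajectory. Past a Maxwell time $\bar{t}$, if $\tilde{q} = \Exp(\tilde{\lambda},\cdot) \ne q$ satisfies $\tilde{q}(\bar{t}) = q(\bar{t})$, then necessarily $\tilde{q}'(\bar{t}) \ne q'(\bar{t})$, since two distinct normal extremals cannot share both position and velocity. Concatenating $\tilde{q}|_{[0,\bar{t}]}$ with $q|_{[\bar{t},t]}$ for any $t > \bar{t}$ produces a curve with a genuine corner, and a standard corner-cutting perturbation strictly shortens it, showing $q|_{[0,t]}$ is not minimizing.

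For the lower bound, set $\bar{t} = \tcut(\lambda)$ and assume $\bar{t} < \tconj(\lambda)$; the goal is to show $q(\bar{t})$ is a Maxwell point. Fix $t_n \searrow \bar{t}$. Since $q|_{[0,t_n]}$ is not minimizing, there exist minimizers to $q(t_n)$ of length $\tilde{t}_n \le t_n$; by the no-abnormal hypothesis each is of the form $\Exp(\tilde{\lambda}_n,\cdot)$ for some $\tilde{\lambda}_n \in C$. Extract a convergent subsequence $(\tilde{\lambda}_n,\tilde{t}_n) \to (\tilde{\lambda},\bar{t})$, so that $\Exp(\tilde{\lambda},\bar{t}) = q(\bar{t})$ by continuity. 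If $\tilde{\lambda} \ne \lambda$, then $\bar{t}$ is a Maxwell time and we are done. Otherwise $\tilde{\lambda} = \lambda$; but $\bar{t} < \tconj(\lambda)$ makes $\Exp$ a local diffeomorphism at $(\lambda,\bar{t})$, contradicting the existence of distinct pairs $(\tilde{\lambda}_n,\tilde{t}_n) \ne (\lambda, t_n)$ with identical images under $\Exp$ that both converge to $(\lambda,\bar{t})$.

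The main obstacle is the compactness step used to extract a convergent subsequence, since the cylinder $C$ is noncompact. Ensuring that the $\tilde{\lambda}_n$ do not drift to infinity amounts to a properness-type statement for $\Exp$ on bounded time intervals; this is precisely the role played in the Cartan setting by results such as \cref{prop-exp-is-proper}, and in the general framework of the theorem it follows from completeness of the sub-Riemannian distance together with the no-abnormal hypothesis.
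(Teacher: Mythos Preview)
The paper does not prove this statement; it is quoted verbatim as Theorem~8.72 of \cite{AgrachevBarilariBoscain:2020:SRGeomIntro} and used as a black box. Your sketch is essentially the standard argument from that reference, so there is nothing in the paper itself to compare against.

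One concrete error is worth flagging. The assertion that ``two distinct normal extremals cannot share both position and velocity'' is false in sub-Riemannian geometry: the tangent vector $q'(\bar t)=u_1X_1+u_2X_2$ records only the horizontal components $h_1,h_2$ of the covector, not the vertical ones (already in the Heisenberg group, two geodesics with different vertical momentum can be tangent at a point). The conclusion that optimality fails past a Maxwell time is still correct, but the justification must go through the concatenation rather than a corner. If $q|_{[0,t]}$ were optimal for some $t>\bar t$, then the broken curve $\tilde q|_{[0,\bar t]}*q|_{[\bar t,t]}$ would also be optimal, hence (by the no-abnormal hypothesis) a normal extremal $\Exp(\mu,\cdot)$; uniqueness of the normal lift along the strictly normal arc $q|_{[\bar t,t]}$ forces $\mu=\lambda$, so the concatenation equals $q$ and $\tilde q|_{[0,\bar t]}=q|_{[0,\bar t]}$, contradicting $\tilde q\not\equiv q$. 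Your invocation of \cref{prop-exp-is-proper} for the compactness step is also slightly misplaced: that result is about properness on the restricted Cartan domain $\preimagedomain$, not a general bounded-time statement; the mechanism you actually want (unbounded covectors force abnormal limit trajectories) is the content of \cref{lemma:abnormal limit}.
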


A common reason for Maxwell points to appear along a trajectory is a symmetry.
\begin{definition}
A pair of mappings 
$$ \Sym \colon N \to N, \qquad \Sym \colon M \to M $$
is called \emph{a symmetry of the exponential mapping} if
$$ \Sym\circ \Exp(\nu) = \Exp \circ \Sym(\nu), \quad \nu \in N,$$
and the first mapping $\Sym$ preserves time.
\end{definition}

\begin{definition}
Let $\Sym$ be a symmetry of the exponential mapping. 
\emph{The Maxwell set corresponding to $\Sym$ in the preimage of the exponential mapping} is 
\begin{align*}
\MAX_{\Sym} = \{\nu \in N \mid \Sym(\nu) \neq \nu, \Exp(\nu) = \Exp \circ \Sym(\nu)\}. 
\end{align*}
The set of \emph{fixed points corresponding to $\Sym$} is
\begin{align*}
\FIX_{\Sym} = \{\nu\in N \mid \Sym(\nu) = \nu\}.
\end{align*}
\end{definition}

A priori the first Maxwell time may not correspond to any symmetry $\Sym$ of $\Exp$.
This happens for instance in the affine on control Euler's elastic problem \cite{Ardentov:2019:HiddenMaxwellElastic}.
However, the first Maxwell time corresponds to a symmetry of $\Exp$ in each of the fully studied left-invariant sub-Riemannian problems on the following groups: the Heisenberg group $\H(3)$~\cite{Vershik_Gershkovich:1987:Nonholonomic_dynamical_systems}; the
groups $\SO(3)$, $\SL(2)$ with axisymmetric metrics~\cite{BoscainRossi:2008:S03SL2}; $\SE(2)$~\cite{Sachkov:2010:CutSE2}; $\SH(2)$~\cite{ButtBhattiSachkov:2017:CutSH2}; the Engel group $\engel$~\cite{ArdentovSachkov:2015:CutEngel}. In this paper, we prove that the same is true for the problem on the Cartan group $\cartan$.

\subsection{Known facts about the Cartan case}\label{subsec-cartan}
The control system~(\ref{sys}) for the left-invariant sub-Riemannian problem on the Cartan group can be specified more explicitly in coordinates as follows:
\begin{align}
\dot{x} & = u_1, \label{dx}\\
\dot{y} & = u_2, \label{dy}\\
\dot{z} & = \frac{- u_1 y + u_2 x}{2}, \label{dz}\\
\dot{v} & = u_2 \frac{x^2 + y^2}{2}, \label{dv}\\
\dot{w} & = -u_1 \frac{x^2 + y^2}{2}, \label{dw} 
\end{align}
where $q=(x,y,z,v,w) \in \cartan \cong \RR^5$.

The family of all normal extremal trajectories of the problem is parametrized by the cylinder
\begin{align*}
C^{\cartan} &= \left\{\lambda \in T_{\qzero}^* \cartan \mid H(\lambda)= 1/2 \right\} \\
 &= \left\{(\theta, c, \alpha, \beta) \in S^1 \times \RR\times\RR \times S^1 \mid \alpha \geq 0  \right\}, 
\end{align*}
where $\alpha, \beta$ represent polar coordinates. The cylinder $C^{\cartan}$ is further decomposed into subsets as follows:
\begin{align*}
&C^{\cartan}=\cup_{i=1}^7 C_i^{\cartan},   \quad  C_i^{\cartan} \cap C_j^{\cartan} = \emptyset, \ i \neq j, \quad \lambda = (\theta,c,\alpha,\beta), \\
&C_1^{\cartan} = \{\lambda \in C^{\cartan} \mid \alpha \neq 0, E\in(- \alpha, \alpha)\}, \\
&C_2^{\cartan} = \{\lambda \in C^{\cartan} \mid \alpha \neq 0, E\in(\alpha,+\infty)\}, \\
&C_3^{\cartan} = \{\lambda \in C^{\cartan} \mid \alpha \neq 0, E=\alpha, c \neq 0 \}, \\
&C_4^{\cartan} = \{\lambda \in C^{\cartan} \mid \alpha \neq 0, E=-\alpha, c = 0, \theta - \beta = 0\}, \\
&C_5^{\cartan} = \{\lambda \in C^{\cartan} \mid \alpha \neq 0, E=\alpha, c = 0, \theta - \beta = \pi\}, \\
&C_6^{\cartan} = \{\lambda \in C^{\cartan} \mid \alpha = 0,  c \neq 0\}, \\
&C_7^{\cartan} = \{\lambda \in C^{\cartan} \mid \alpha = c = 0\},
\end{align*}
where $E  = \frac{c^2}{2} - \alpha \cos (\theta - \beta)$ is the energy of the mathematical pendulum:
\begin{align}
\dot{\theta} = c, \qquad \dot{c} = - \alpha \sin (\theta- \beta), \qquad \dot{\alpha}=\dot{\beta}=0. \label{eq:pendulum}
\end{align}

For each value of the constants $\alpha, \beta$, the pendulum trajectory $(\theta_t, c_t)$ defines a trajectory in $\cartan$ via the exponential mapping
$\Exp^{\cartan} (\lambda,t) = q_t$, where $\lambda = (\theta,c,\alpha,\beta)$ and $(\theta, c):=(\theta_0, c_0) \in S_{\theta}^1 \times \RR_c$ is the initial point of the pendulum trajectory. 

Elliptic coordinates $\lambda = (\varphi, k, \alpha, \beta)$ on the sets $C_1^{\cartan}, C_2^{\cartan}, C_3^{\cartan}$ were intruduced in~\cite{Sachkov:2003:exp_map} for the explicit parametrization of $\Exp^\cartan$. The parameter $\varphi$ is the motion time of the pendulum \eqref{eq:pendulum} from the point of stable equilibrium. The parameter $k$ is a reparametrization of the energy $E$: 
\begin{align*}
&\lambda \in C_1^{\cartan}  &&\Rightarrow && k = \sqrt{\frac{E+\alpha}{2\alpha}} \in (0,1), \\
&\lambda \in C_2^{\cartan}  &&\Rightarrow && k = \sqrt{\frac{2\alpha}{E+\alpha}} \in (0,1), \\
&\lambda \in C_3^{\cartan}  &&\Rightarrow && k = 1. 
\end{align*}

The projections of $\Exp^\cartan(\lambda, t)$ to the plane $(x,y)$ for $\lambda$ in $C_1^{\cartan}$, $C_2^{\cartan}$, and $C_3^{\cartan}$ are inflectional, non-inflectional, and critical Euler elasticae respectively. When $\lambda \in C_{457}^{\cartan} = C_4^{\cartan} \cup C_5^{\cartan} \cup C_7^{\cartan}$ the projections are straight lines parametrized by $\lambda \in S_{\theta}^1$. The projections for $\lambda \in C_6^{\cartan}$ are circles parametrized by $\lambda = (\theta,c), c\neq 0$.

\begin{remark} \label{infinite-finite}
If $\lambda \in C_3^\cartan \cup C_{457}^{\cartan}$, then $\tcut^\cartan(\lambda) = \infty$. 

For $\lambda \in C_1^\cartan \cup C_2^\cartan \cup C_6^\cartan$, the cut time $\tcut^\cartan(\lambda)$ is finite.
\end{remark}

A two-parameter group of continuous symmetries of the exponential mapping is formed by dilations and rotations
\begin{align*}
\delta_\mu &: (\theta, c, \alpha, \beta, t) & &\mapsto  & &(\theta, c/\mu, \alpha/\mu^2, \beta, \mu t), \quad \mu > 0, \nonumber \\
\delta_\mu &: (x, y, z, v, w) & &\mapsto & &(\mu x, \mu y, \mu^2 z, \mu^3 v, \mu^3 w); \\
R_\eta &: (\theta, c, \alpha, \beta, t) & &\mapsto  & &(\theta-\eta, c, \alpha, \beta-\eta, t), \quad \eta \in S^1, \nonumber \\
R_\eta &: (x, y, z, v, w) & &\mapsto & &(x \cos \eta + y \sin \eta, y \cos \eta - x \sin \eta, z, \\
&&&&&  v \cos \eta + w \sin \eta, w \cos \eta - v \sin \eta). \nonumber
\end{align*}

There is also a dihedral group of discrete symmetries $G=\{\Id, \varepsilon^1, \varepsilon^2$,  $\varepsilon^3 = \varepsilon^1 \circ \varepsilon^2\}$ of $\Exp^{\cartan}$, which is described in~\cite{Sachkov:2006:dido_discrete_symmetries}. In terms of the $(x,y)$ projection, the symmetry $\varepsilon^1$ reflects an elastica in the center of its chord; the symmetry $\varepsilon^2$ reflects an elastica in the perpendicular bisector to the chord up to an additional rotation; the symmetry $\varepsilon^3$ reflects an elastica in the chord up to the same additional rotation. Those symmetries generate the corresponding Maxwell sets $\MAX_i:=\MAX_{\varepsilon^i}$ and the corresponding sets of fixed points $\FIX_i:=\FIX_{\varepsilon^i}$ in the preimage of the exponential mapping. For a detailed description of $\MAX_i$, $\FIX_i$, see~\cite{Sachkov:2006:dido_maxwell_set}. Denote the unions of these sets by
\begin{align*}
\MAX = \cup_{i=1}^3 \MAX_i, \qquad \FIX = \cup_{i=1}^3 \FIX_i.
\end{align*}

\begin{lemma}[{\cite[Corollary~2.2, Corollary~2.4, Corollary~3.1, Proposition~3.5]{Sachkov:2006:dido_complete_description_of_maxwell_strata}}]\label{lemma:inclusions}
	\begin{equation*}
	(\lambda,t) \in \MAX \cup \FIX \iff \Exp^{\cartan}(\lambda,t) \in \cartan',
	\end{equation*}
	where
	\begin{align*}
	\cartan'&=\{(x,y,z,v,w)\in\cartan \mid z V=0\},\\
	V &=  x v + y w - \frac{(x^2 + y^2) z}{2}.
	\end{align*}
\end{lemma}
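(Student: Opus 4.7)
The plan is to verify this characterization separately for each of the three nontrivial symmetries $\varepsilon^1, \varepsilon^2, \varepsilon^3$ in the dihedral group $G$. First I would make explicit the induced action of each $\varepsilon^i$ on the coordinates $(x,y,z,v,w)$ on $\cartan$. The geometric descriptions recalled above (reflection of the $(x,y)$-elastica in the chord midpoint, in the perpendicular bisector, and in the chord itself, each combined with the appropriate rotation so that the endpoints are fixed) together with left-invariance force these actions to be affine involutions of $\RR^5$. Their fixed-point sets are then determined by which monomials of the standard polynomials generating the $G$-invariants are odd under each involution.

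For the forward inclusion, suppose $(\lambda,t)\in\FIX_i$. Then $\varepsilon^i$ acts trivially on $(\lambda,t)$, and the intertwining identity $\Exp^{\cartan}\circ\,\varepsilon^i=\varepsilon^i\circ\Exp^{\cartan}$ implies $\Exp^{\cartan}(\lambda,t)$ is a fixed point of $\varepsilon^i$ on $\cartan$. Similarly, if $(\lambda,t)\in\MAX_i$ then
\begin{equation*}
\Exp^{\cartan}(\lambda,t)=\Exp^{\cartan}(\varepsilon^i(\lambda,t))=\varepsilon^i(\Exp^{\cartan}(\lambda,t)),
\end{equation*}
so again the image lies in $\operatorname{Fix}(\varepsilon^i)$. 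A direct calculation should then show that among the two generators one has $\operatorname{Fix}(\varepsilon^1)=\{V=0\}$ and $\operatorname{Fix}(\varepsilon^2)=\{z=0\}$ (up to labelling), with the product $\varepsilon^3=\varepsilon^1\circ\varepsilon^2$ fixing neither coordinate individually but producing points in the union. This yields $\mathrm{MAX}\cup\mathrm{FIX}\subseteq\{\Exp^{\cartan}\in\cartan'\}$.

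For the converse, I would use the elliptic coordinates $(\varphi,k,\alpha,\beta)$ of \cite{Sachkov:2003:exp_map} to write $z(t)$ and $V(t)$ along an extremal as explicit expressions in the Jacobi elliptic functions $\sn,\cn,\dn$ governed by the pendulum equation \eqref{eq:pendulum}. The vanishing of $z(t)$ should reduce to the condition that the pendulum is displaced by a half-period from a zero-velocity phase, and the vanishing of $V(t)$ to the condition that it is displaced by a half-period from an equilibrium phase. Each of these pendulum configurations is precisely the algebraic condition guaranteeing that the elastica possesses the geometric symmetry needed to exhibit $(\lambda,t)$ as a Maxwell point of $\varepsilon^1$ or $\varepsilon^2$, or as a fixed point of one of the $\varepsilon^i$ at the boundary.

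The main obstacle is the converse direction, which requires a case-by-case analysis over the strata $C_1^{\cartan},\dots,C_6^{\cartan}$ because the elliptic parametrization and hence the explicit formulas for $z(t)$ and $V(t)$ change form in each regime; in particular, the separatrix $C_3^{\cartan}$ and the circular stratum $C_6^{\cartan}$ must be handled via degenerate elliptic formulas or by a limit argument. One must also verify that no spurious zeros of $zV$ appear outside of those produced by the symmetries, i.e.\ that the listed pendulum configurations exhaust all the roots in each stratum. This combinatorial verification in elliptic functions is precisely what is carried out in \cite{Sachkov:2006:dido_complete_description_of_maxwell_strata}.
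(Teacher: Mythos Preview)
The paper does not prove this lemma; it is quoted directly from \cite{Sachkov:2006:dido_complete_description_of_maxwell_strata} (the four results listed in the citation), so there is no in-paper argument to compare your sketch against. Your outline is broadly in the spirit of that reference, but two points deserve comment.

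First, your converse direction is more elaborate than necessary. From the definitions of $\MAX_i$, $\FIX_i$ and the intertwining $\varepsilon^i\circ\Exp^\cartan=\Exp^\cartan\circ\varepsilon^i$ one has, for each $i$, the tautology
\begin{equation*}
(\lambda,t)\in\MAX_i\cup\FIX_i \quad\Longleftrightarrow\quad \Exp^\cartan(\lambda,t)=\varepsilon^i\bigl(\Exp^\cartan(\lambda,t)\bigr),
\end{equation*}
so the entire lemma reduces to the purely algebraic identity $\bigcup_{i=1}^3\{q\in\cartan\mid\varepsilon^i(q)=q\}=\cartan'$. Once the action of each $\varepsilon^i$ on the coordinates $(x,y,z,v,w)$ has been written down, both implications follow at once; no stratum-by-stratum elliptic analysis on $C_1^\cartan,\dots,C_6^\cartan$ is required for the converse, and your concern about ``spurious zeros of $zV$'' does not arise.

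Second, there is a gap in your treatment of $\varepsilon^3$. You write that it ``fix[es] neither coordinate individually but produc[es] points in the union,'' which is not an argument. Since $\varepsilon^3=\varepsilon^1\circ\varepsilon^2$ with $\varepsilon^1,\varepsilon^2$ commuting involutions, its fixed-point set in $\cartan$ can be strictly larger than the intersection $\{z=0\}\cap\{V=0\}$, and you must check explicitly that it is nonetheless contained in $\{zV=0\}$. This is precisely the content of the cited Proposition~3.5 of \cite{Sachkov:2006:dido_complete_description_of_maxwell_strata}.
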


A function $\mathbf{t}\colon C^{\cartan} \to (0, +\infty]$ for the minimal Maxwell time corresponding to the symmetries $\varepsilon^1, \varepsilon^2$ is defined in~\cite{Sachkov:2006:dido_complete_description_of_maxwell_strata}. It gives an upper bound for the first Maxwell time and hence the cut time, i.e.,
\begin{align}
\tcut^\cartan (\lambda)\leq \tmax(\lambda)\leq \mathbf{t}(\lambda), \qquad \forall \lambda \in C^{\cartan}. \label{tbold-above-tcutC}
\end{align}

As mentioned in Remark~\ref{infinite-finite}, $\mathbf{t}(\lambda) = \infty$ for $\lambda \in C_3^\cartan \cup C_{457}^{\cartan}$. Elsewhere 
using dilations $\delta_\mu$, we define the renormalized function $\mu \mathbf{t}(\lambda) = \mathbf{t} \circ \delta_{\mu_\lambda} (\lambda)$ so that one period of the corresponding elastica has unit length. The explicit dilation factors $\mu_\lambda$ and the resulting function $\mu\mathbf{t}$ are
\begin{align*}
&\lambda  \in C_1^{\cartan} & \Rightarrow \quad & \mu_\lambda = \frac{\sqrt{\alpha}}{4 K(k)}, &&\mu \mathbf{t}(\lambda)  = \mathbf{t}_1 (k) = \min\{\mathbf{t}_1^z (k), \mathbf{t}_1^V (k)\}, \\
&\lambda  \in C_2^{\cartan} & \Rightarrow \quad & \mu_\lambda = \frac{\sqrt{\alpha}}{2k K(k)}, &&\mu \mathbf{t}(\lambda) = \mathbf{t}_2 (k) = \mathbf{t}_2^V (k), \\
&\lambda  \in C_6^{\cartan} & \Rightarrow \quad & \mu_\lambda = \frac{|c|}{2 \pi}, &&\mu \mathbf{t}(\lambda) = \mathbf{t}_2 (0) =  \mathbf{t}_2^V (0),
\end{align*}
where $K(k)$ is the complete elliptic integral of the first kind and $\mathbf{t}_1^z (k), \mathbf{t}_1^V (k), \mathbf{t}_2^V (k)$ correspond to the minimal times with vanishing $z$ or $V$. Explicit formulas are given in \cref{appendix-formulas}.

To prove our main theorems, we need the following bounds for $\mathbf{t}_1$ and $\mathbf{t}_2$.

\begin{lemma}\label{lemma:C_1 root bound}
	The Maxwell times $\mathbf{t}_1^z(k), \mathbf{t}_1^V(k)$ satisfy 
	\begin{align*}
&k\in[0,k_0)& &\Rightarrow & &\mathbf{t}_1^z(k)\in(1,3/2), \quad \mathbf{t}_1^V(k) \in (1, 2),\\ 
&k=k_0& &\Rightarrow & &\mathbf{t}_1^z(k)=\mathbf{t}_1^V(k)=1, \\
&k\in(k_0,1)& &\Rightarrow & &\mathbf{t}_1^z(k) \in (1/2, 1), \quad \mathbf{t}_1^V(k) \in (1, 2).
	\end{align*}
The first Maxwell time $\mathbf{t}_1(k)$ satisfies 
\begin{align*}
\mathbf{t}_1(k) =  \begin{cases}
\mathbf{t}_1^z(k), \quad k\in(0,k_1]\cup[k_0,1),\\
\mathbf{t}_1^V(k), \ \quad \quad k\in[k_1,k_0],
\end{cases}
\end{align*}
where $k_0 \approx 0.909$ and $k_1 \approx 0.802$ are roots of certain equations in Jacobi elliptic functions and satisfy $\mathbf{t}_1^z(k)=\mathbf{t}_1^V(k)$.	
\end{lemma}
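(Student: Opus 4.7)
The plan is to work directly with the explicit parametrizations of $\mathbf{t}_1^z(k)$ and $\mathbf{t}_1^V(k)$ given in \cref{appendix-formulas}. After the normalization by $\mu_\lambda = \sqrt{\alpha}/(4K(k))$, the underlying pendulum of \eqref{eq:pendulum} has unit period, so both Maxwell times live naturally in $(0,2)$, and the break-points $\{1/2, 1, 3/2, 2\}$ appearing in the claimed bounds correspond exactly to the quarter-, half-, three-quarter-, and full-period marks. Thus I would recast the lemma as a statement about the first positive zeros of two equations $F^z(t,k)=0$ and $F^V(t,k)=0$ in the normalized time variable, attacking it by sign-change arguments.

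The first step is to evaluate $F^z(\cdot,k)$ and $F^V(\cdot,k)$ at the candidate endpoints. Using the half-period identities for $\sn$, $\cn$, $\dn$ and the relations between $E(u,k)$ and the complete integrals $K(k)$, $E(k)$, each value reduces to a concrete combination of $K(k)$ and $E(k)$ whose sign can be read off from classical monotonicity estimates (for instance, $E(k) < K(k)$ on $(0,1)$ and $E(k)/K(k)$ strictly decreasing). This places the first zero of each of the two equations inside one of the intervals $(1/2,1)$, $(1,3/2)$, or $(1,2)$ depending on the regime of $k$.

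The next step is to establish monotonicity of $F^z(\cdot,k)$ and $F^V(\cdot,k)$ in $t$ on the relevant intervals, so that the IVT upgrades the sign change to a unique first root. This is the most technical portion of the proof, since the $t$-derivatives are again products of Jacobi functions that must be shown to keep a constant sign on the subintervals in question; I would carry this out by writing those derivatives in the compact form $\dn(t - t_0, k)\, \text{(polynomial in } \sn, \cn\text{)}$ and checking the sign of the polynomial factor between its zeros.

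The main obstacle will be the identification and uniqueness of the crossing points $k_0$ and $k_1$, and the resulting branch structure of $\mathbf{t}_1(k)$. The value $k_0$ is characterized by $\mathbf{t}_1^z(k_0) = \mathbf{t}_1^V(k_0) = 1$, which by step one collapses into a single transcendental equation $\Phi(k) = 0$ in $K(k)$ and $E(k)$; similarly $k_1$ satisfies a different crossing equation. I would prove uniqueness of these roots in $(0,1)$ by differentiating in $k$ and invoking Legendre's relation $E(k)K'(k) + E'(k)K(k) - K(k)K'(k) = \pi/2$ together with $K'(k)>0$ and $(E/K)'(k)<0$, obtaining $\Phi'(k)\neq 0$ at the root. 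Numerical evaluation then yields $k_0\approx 0.909$ and $k_1\approx 0.802$. Finally, the branch formula for $\mathbf{t}_1(k) = \min\{\mathbf{t}_1^z(k),\mathbf{t}_1^V(k)\}$ follows by continuity: the difference $\mathbf{t}_1^z - \mathbf{t}_1^V$ has at most the two zeros $k_0, k_1$ on $(0,1)$, and evaluation at one interior point per subinterval $(0,k_1)$, $(k_1,k_0)$, $(k_0,1)$ fixes its sign, yielding the three cases stated in the lemma.
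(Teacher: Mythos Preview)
The paper does not prove this lemma at all in the sense you are attempting: its entire proof reads ``Follows immediately from \cite[Corollary~2.1, Proposition~2.2, Proposition~2.5]{Sachkov:2006:dido_complete_description_of_maxwell_strata}.'' In other words, every assertion in the statement --- the location of $\mathbf{t}_1^z(k)$ and $\mathbf{t}_1^V(k)$ in the half-period intervals, the identification of $k_0$ and $k_1$, and the branch structure of $\mathbf{t}_1(k)$ --- is quoted from prior work, not derived here.

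Your proposal instead sketches an \emph{ab initio} proof, essentially reconstructing what the cited reference does. The overall strategy (endpoint evaluations via half-period identities, sign changes plus monotonicity to locate unique roots, then crossing analysis for $k_0,k_1$) is the natural one and is indeed how such results are obtained. But be aware that the monotonicity step you flag as ``the most technical portion'' is genuinely nontrivial: the functions $f_1^z$ and especially $f_1^V$ are not monotone on the full intervals in question, and in the cited work the argument proceeds instead by carefully bounding the roots between consecutive zeros of auxiliary factors rather than by global monotonicity. Likewise, the uniqueness of $k_0$ and $k_1$ is handled in \cite{Sachkov:2006:dido_complete_description_of_maxwell_strata} by a more delicate analysis than a single derivative sign via Legendre's relation. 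So while your plan is sound in outline, filling it in would amount to redoing a substantial part of the cited paper; for the purposes of the present article, the one-line citation is the intended proof.
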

\begin{proof}
Follows immediately from \cite[Corollary~2.1, Proposition~2.2, Proposition~2.5]{Sachkov:2006:dido_complete_description_of_maxwell_strata}.
\end{proof}

\begin{lemma}\label{lemma:C_2 root bound}
	The first Maxwell time $\mathbf{t}_2(k)$ satisfies 
	\begin{equation*}
	k\in[0,1] \qquad\Rightarrow\qquad \mathbf{t}_2(k) \in [1,2).
	\end{equation*}
\end{lemma}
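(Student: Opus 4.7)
The plan is to mirror the approach of the preceding \cref{lemma:C_1 root bound}: reduce the claim to the explicit description of $\mathbf{t}_2(k) = \mathbf{t}_2^V(k)$ from Sachkov's analysis of the Maxwell strata~\cite{Sachkov:2006:dido_complete_description_of_maxwell_strata}, in which $\mathbf{t}_2^V(k)$ is characterized as the smallest positive root of a transcendental equation in Jacobi elliptic functions (see also \cref{appendix-formulas}). With that characterization in hand, the lemma becomes an analytic statement about the location of that root on $[0,1]$.

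First I would check the endpoints directly. At $k = 0$, the non-inflectional elastica degenerates to a circle and the normalization $\mu_\lambda = \sqrt{\alpha}/(2kK(k))$ passes over to the $C_6^\cartan$ normalization $\mu_\lambda = \abs{c}/(2\pi)$; the first Maxwell point from $\varepsilon^2$ then occurs after exactly one full period of the circle, giving $\mathbf{t}_2(0) = 1$. As $k\to 1^-$, the non-inflectional elastica approaches the critical elastica (whose cut time is infinite by \cref{infinite-finite}), and substituting the hyperbolic limits of $\sn, \cn, \dn$ into the defining equation should yield $\mathbf{t}_2(k)\to 2$ from below.

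For the interior bound on $(0,1)$, I would write the defining relation as $F(t,k) = 0$ with $F$ smooth in both variables and verify the sign pattern $F(1,k) \leq 0 \leq F(2,k)$ using elementary inequalities on $\sn, \cn, \dn$ and on $K(k)$. Since $\mathbf{t}_2(k)$ is the smallest positive root, this forces $\mathbf{t}_2(k) \in [1,2)$, with equality to $1$ only at $k = 0$.

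The main obstacle I expect is the strictness and uniformity of the upper bound near $k = 1$, where both sides of the defining equation degenerate simultaneously and one must track the order of vanishing carefully. In practice this exact bound already appears inside the Maxwell-strata computations of~\cite{Sachkov:2006:dido_complete_description_of_maxwell_strata}, so the cleanest proof is simply to cite the corresponding statements there — analogously to how \cref{lemma:C_1 root bound} was reduced to \cite[Propositions~2.2, 2.5]{Sachkov:2006:dido_complete_description_of_maxwell_strata}.
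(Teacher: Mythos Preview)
Your overall strategy --- cite Sachkov for the interior and treat the endpoints separately --- matches the paper's, but two of your concrete claims are wrong and the third hides the actual work.

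First, $\mathbf{t}_2(0)\neq 1$. The Engel cut time for the circle is one period, but the Cartan $\varepsilon^2$-Maxwell time is strictly larger: the paper computes $\mathbf{t}_2^V(0)\approx 1.465$ as the first positive root of $(32p^2-1)\cos(2p)-8p\sin(2p)+\cos(6p)=0$, and \cite[Proposition~3.4]{Sachkov:2006:dido_complete_description_of_maxwell_strata} gives $1<\lim_{k\to 0}\mathbf{t}_2^V(k)<2$. Second, the limit as $k\to 1$ is not $2$; the paper bounds it by $3/2$.

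The real gap is your last paragraph. You correctly flag the $k\to 1$ degeneration as the delicate point, but then assert the bound ``already appears inside'' \cite{Sachkov:2006:dido_complete_description_of_maxwell_strata}. It does not: Sachkov's Proposition~3.2 gives $\mathbf{t}_2^V(k)\in(1,2)$ only for $k\in(0,1)$, and the strict upper bound $<2$ does not survive the limit without further argument since $K(k)\to\infty$ and all terms of $f_2^V$ degenerate. The paper closes this by proving a separate technical lemma (\cref{lemma-f2v-limit-as-k-to-1}) establishing a sign change $f_2^V(K(k),k)<0\leq f_2^V(\tfrac{3}{2}K(k),k)$ for $k$ near $1$, via explicit asymptotics of $\dn$, $\cn$, $\sn$ and $g_2$ at $p=\tfrac{3}{2}K(k)$. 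Your proposed sign pattern $F(1,k)\leq 0\leq F(2,k)$ is the right shape of argument, but you would need to actually carry out the estimate, and at $t=2$ rather than $t=3/2$ the cancellations are worse.
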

\begin{proof}
	By \cite[Proposition~3.2]{Sachkov:2006:dido_complete_description_of_maxwell_strata}, we have $\mathbf{t}_2^V(k)\in (1,2)$ for all $k\in(0,1)$. Moreover, by \cite[Proposition~3.4]{Sachkov:2006:dido_complete_description_of_maxwell_strata}, we have also
	\begin{equation*}
	1<\lim\limits_{k\to 0}\mathbf{t}_2^V(k) < 2.
	\end{equation*}
	Finally, by \cref{lemma-f2v-limit-as-k-to-1}, there exists $\tilde{k}<1$ such that $\mathbf{t}_2^V(k)\leq \frac{3}{2}$ for all $\tilde{k}<k<1$. It follows that $\lim\limits_{k\to 1}\mathbf{t}_2^V(k)\leq \frac{3}{2}<2$, proving the statement of the lemma.
\end{proof}

\begin{theorem}[\cite{Sachkov:2021:Cartan_conj_time}]\label{tconjC-above-tbold}
For each $\lambda \in C^{\cartan}$
$$\mathbf{t}(\lambda) \leq \tconj(\lambda).$$
\end{theorem}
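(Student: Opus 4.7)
The plan is to reduce the problem to the normalized statement on each stratum $C_1^{\cartan}, C_2^{\cartan}, C_6^{\cartan}$ (on $C_3^{\cartan}\cup C_{457}^{\cartan}$ we have $\mathbf{t}(\lambda)=\infty$, so by \cref{infinite-finite} there is nothing to prove, and of course $\tconj$ also is infinite on those strata since the corresponding trajectories are extensions of straight lines or optimal critical elasticae). Since both $\mathbf{t}$ and $\tconj$ are defined through critical/singular behavior of $\Exp^{\cartan}$ and the dilations $\delta_\mu$ commute with $\Exp^\cartan$ up to the time reparametrization $t \mapsto \mu t$, the inequality $\mathbf{t}(\lambda)\leq \tconj(\lambda)$ is scale invariant. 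Hence it suffices to compare the normalized functions $\mathbf{t}_1(k),\mathbf{t}_2(k)$ given in \cref{subsec-cartan} with the correspondingly normalized first conjugate times $\tau_1(k),\tau_2(k)$ obtained by the same dilation factor $\mu_\lambda$.

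The key analytic object is the Jacobian of the exponential mapping written in elliptic coordinates $(\varphi,k,\alpha,\beta)$ on $C_1^\cartan$ and $C_2^\cartan$. A conjugate time is precisely a zero of this Jacobian. Using the integrability of the Hamiltonian system and the explicit parametrization of $\Exp^\cartan$ by elliptic integrals and Jacobi functions (as in \cite{Sachkov:2003:exp_map}), one can write
\begin{equation*}
J(\varphi,k,t) \;=\; \det\!\left(\frac{\partial \Exp^\cartan}{\partial(\varphi,k,\alpha,\beta,t)}\right)
\end{equation*}
and factor out an explicit non-vanishing normalization, reducing the problem to a function $\tilde J(k,t)$ of the normalized time $t$ and the elliptic modulus $k$. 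The task then becomes to show that $\tilde J(k,t)$ has no zero on the open interval $(0,\mathbf{t}_i(k))$ for $i\in\{1,2\}$, from which $\tau_i(k)\geq \mathbf{t}_i(k)$ follows.

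The estimates of \cref{lemma:C_1 root bound,lemma:C_2 root bound} give $\mathbf{t}_1(k),\mathbf{t}_2(k)\in(0,2)$, so it suffices to prove the non-vanishing of $\tilde J(k,t)$ on $(0,2)$ uniformly in $k\in(0,1)$, with the correct handling of the thresholds $k_0,k_1$. A natural approach, carried out in \cite{Sachkov:2021:Cartan_conj_time}, is to use the Hamilton--Jacobi equations for the Jacobi fields along each extremal to express $\tilde J$ as an alternating sum of products of $\sn,\cn,\dn$ and $K(k),E(k)$ and then to apply monotonicity arguments (in the spirit of the conjugate time estimates for the sub-Riemannian Engel group in \cite{ArdentovSachkov:2013:ConjugateEngel}) to localize the sign of $\tilde J$ on the relevant time intervals. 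The $C_6^\cartan$ case is handled as a limit $k\to 0$ of $C_2^\cartan$.

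The principal obstacle is the explicit verification that $\tilde J(k,t)\neq 0$ on $(0,\mathbf{t}_i(k))$: the formula is sufficiently complicated that one cannot reduce to a single monotone quantity, and the argument must be split along the values $k_0,k_1$ of \cref{lemma:C_1 root bound} where $\mathbf{t}_1(k)$ changes branch between $\mathbf{t}_1^z$ and $\mathbf{t}_1^V$. Once these monotonicity/sign estimates are in place, the theorem follows by combining them branch by branch with the explicit formulas of \cref{appendix-formulas}; all of this technical work is exactly what is accomplished in \cite{Sachkov:2021:Cartan_conj_time}, which we invoke.
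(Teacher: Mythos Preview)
The paper does not give its own proof of this theorem: it is stated as a citation of \cite{Sachkov:2021:Cartan_conj_time} with no accompanying argument. Your proposal likewise terminates by invoking that same reference, so at the level of what is actually being asserted and justified, you and the paper agree.

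That said, the sketch you place around the citation contains some inaccuracies that you should be aware of. First, on $C_3^{\cartan}\cup C_{457}^{\cartan}$ you say ``there is nothing to prove'' because $\mathbf{t}(\lambda)=\infty$; but precisely because $\mathbf{t}(\lambda)=\infty$, the inequality $\mathbf{t}(\lambda)\leq\tconj(\lambda)$ forces $\tconj(\lambda)=\infty$, which is a nontrivial statement (absence of conjugate points along the entire trajectory), not a vacuous one. Second, the claim that the Jacobian factors down to a function $\tilde J(k,t)$ of only $k$ and normalized time is stronger than what is automatic: a priori the Jacobian and hence $\tconj$ could depend on the phase $\varphi$, and removing that dependence is part of the actual work in \cite{Sachkov:2021:Cartan_conj_time}, not a free consequence of dilation symmetry. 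Third, the assertion that it suffices to show $\tilde J(k,t)\neq 0$ on $(0,2)$ uniformly in $k$ is both stronger than needed and not obviously true; one only needs nonvanishing on $(0,\mathbf{t}_i(k))$. None of this affects the validity of your proposal, since you ultimately cite the reference, but the surrounding narrative should not be read as an accurate summary of what that reference does.
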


By~(\ref{tbold-above-tcutC}) and Theorem~\ref{tconjC-above-tbold}, the subset
$$ \optimalpreimage = \{ (\lambda, t) \in C^\cartan\times[0,\infty) \mid  t \leq \mathbf{t} (\lambda) \} $$
in the preimage of the exponential mapping describes all the potentially optimal geodesics.

In order to prove that all these geodesics are indeed optimal, we study the restriction of $\Exp^{\cartan}$ to the following set:
\begin{align*}
&\preimagedomain:= \widehat{N} \setminus (\FIX \cup \MAX), \\
\Exp(&\preimagedomain) \subset \imagedomain, \qquad \imagedomain = \cartan \setminus \cartan' = \{ (x,y,z,v,w) \in \cartan \mid z V \neq 0 \}.
\end{align*}

\subsection{Comparison with the Engel cut time}
Let us recall the known facts about the solution for the sub-Riemannian problem on the Engel group $\engel$ that we are going to use in the study of $\Exp|_{\preimagedomain}$. 
The control system~(\ref{sys}) for the left-invariant sub-Riemannian problem on the Engel group can be specified by equations (\ref{dx})--(\ref{dv}), i.e., we have a natural projection 
\begin{align*}
\pi_{\engel}\colon \cartan \to \engel, \qquad \pi_{\engel}(x,y,z,v,w)=(x,y,z,v).
\end{align*}

The family of all normal extremal trajectories of the problem is parametrized by the cylinder
\begin{align*}
C^{\engel} &= \left\{\lambda^{\engel} \in T_{\qzero}^* \engel \mid H(\lambda^{\engel})= 1/2 \right\} = \left\{(\theta, c, \alpha) \in S^1 \times \RR\times\RR \right\}.
\end{align*}
We also define a projection between the cylinders
\begin{align*}
\pi_{\engel}: C^{\cartan} \to C^{\engel}, \qquad \pi_{\engel}(\theta, c, \alpha, \beta)=(\theta, c, \alpha).
\end{align*}
The cylinder in the Engel case also has a decomposition $C^{\engel} = \cup_{i=1}^7 C_i^{\engel}$, see~\cite{ArdentovSachkov:2011:ExtremalEngel} for the details. This decomposition satisfies
\begin{align*}
C_i^{\engel}\cap \{\alpha \geq 0 \} &= \pi_{\engel} \big(C_i^{\cartan} \cap \{\beta = 0\}\big), \quad i = 1 \dots 7. 
\end{align*}
The case $\alpha<0$ is symmetric to the case $\alpha > 0$.

\begin{lemma}
Let $q_t=\Exp^{\cartan}(\lambda^{\cartan},t), t \geq 0$ be an extremal trajectory for the sub-Riemannian problem on $\cartan$ with $\lambda^{\cartan}=(\theta,c,\alpha,\beta) \in C^{\cartan}$. Then $\Exp^{\engel} (\bpiE (\lambda^{\cartan}), t) = \bpiE (q_t)$ is an extremal trajectory for the sub-Riemannian problem on $\engel$, where 
$$\bpiE = \pi_{\engel} \circ R_{\beta-\pi/2}.$$
\end{lemma}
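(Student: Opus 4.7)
The plan is to combine two facts. First, the control equations \eqref{dx}--\eqref{dv} for the coordinates $(x,y,z,v)$ on the Cartan group are \emph{identical} to the defining control equations of the Engel group; the Cartan group differs only by the additional equation \eqref{dw} for $w$, which is discarded by $\pi_\engel$. Consequently, if $q_t$ is a horizontal curve in $\cartan$ driven by controls $(u_1, u_2)$, then $\pi_\engel(q_t)$ is automatically a horizontal curve in $\engel$ driven by the same $(u_1, u_2)$. Second, $R_\eta$ is a symmetry of the Cartan exponential map, so
\begin{equation*}
R_{\beta-\pi/2}(q_t) = \Exp^{\cartan}\bigl(R_{\beta-\pi/2}(\lambda^{\cartan}),\, t\bigr)
\end{equation*}
is again a Cartan extremal, whose covector has angular parameter fixed to $\pi/2$.

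Combining these observations, $\bpiE(q_t) = \pi_\engel(R_{\beta-\pi/2}(q_t))$ is a horizontal curve in $\engel$ whose driving controls agree with the normal extremal controls of the rotated Cartan covector. To identify this curve with the Engel extremal starting from $\bpiE(\lambda^\cartan)$, I would compare the pendulum systems \eqref{eq:pendulum} on both sides: after $R_{\beta-\pi/2}$ the Cartan pendulum is governed by parameters $(\theta-\beta+\pi/2,\, c,\, \alpha,\, \pi/2)$, and under $\pi_\engel$ this collapses to the Engel covector $\bpiE(\lambda^\cartan) = (\theta-\beta+\pi/2,\, c,\, \alpha)$. Using the decomposition compatibility $C_i^{\engel}\cap\{\alpha\geq 0\} = \pi_{\engel}(C_i^{\cartan}\cap\{\beta=0\})$ together with the $\alpha\leftrightarrow -\alpha$ symmetry on the Engel side, one verifies that $h_i^\cartan$ evaluated on the rotated Cartan covector equals $h_i^\engel$ evaluated on $\bpiE(\lambda^\cartan)$, and hence the two extremal trajectories in $\engel$ coincide.

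The main obstacle is purely bookkeeping: the shift by $\pi/2$ rather than by $0$ in the definition of $\bpiE$ encodes the convention difference between the angular parametrizations of the Cartan and Engel pendulum equations. The underlying dynamical content of the lemma is immediate from the coincidence of the first four control equations, so once the correct pendulum correspondence between the two cylinders is established via the explicit parametrizations, no further computation is required.
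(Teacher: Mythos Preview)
Your proposal is correct and amounts to the same verification the paper performs. The paper's proof is a single sentence deferring to the explicit formulas for $\Exp^\cartan$ and $\Exp^\engel$; you supply the conceptual scaffolding (rotation symmetry of $\Exp^\cartan$, plus the observation that $\pi_\engel$ intertwines the horizontal dynamics because \eqref{dx}--\eqref{dv} are literally the Engel equations), but you likewise defer to the explicit parametrizations to pin down the $\pi/2$ shift, so in the end both arguments reduce to the same formula check.
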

\begin{proof}
	Follows directly from the explicit formulas for the exponential mappings in the Cartan case \cite{Sachkov:2003:exp_map} and in the Engel case \cite{ArdentovSachkov:2011:ExtremalEngel}.
\end{proof}

\begin{theorem}[\cite{ArdentovSachkov:2015:CutEngel}]\label{thm:engel-cut}
For each covector $\lambda\in C^\engel$, let $\mu_\lambda$ be the dilation factor such that the corresponding elastica has unit length period.
The normalized function $\mu \tcut^{\engel}: C^{\engel} \to (0, +\infty]$ for the cut times in the Engel group has the following form:
\begin{align*}
&\lambda \in C_1^{\engel} &&\Rightarrow & \mu_\lambda &= \frac{\sqrt{\abs{\alpha}}}{4 K(k)}, &\mu \tcut^{\engel} (\lambda) &= \min\{1,\mathbf{t}_1^z (k)\}, \\
&\lambda \in C_2^{\engel} &&\Rightarrow & \mu_\lambda &= \frac{\sqrt{\abs{\alpha}}}{2 k K(k)}, &\mu \tcut^{\engel} (\lambda) &= 1, \\
&\lambda \in C_6^{\engel} &&\Rightarrow &\mu_\lambda &= \frac{|c|}{2 \pi}, &\mu \tcut^{\engel} (\lambda) &= 1.
\end{align*}
\end{theorem}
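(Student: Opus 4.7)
The plan is to follow the two-step strategy explicitly outlined in the introduction: first obtain upper bounds on the cut time via the first Maxwell and first conjugate times, and then prove these bounds are tight by a sharp uniqueness statement for the exponential mapping.

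For the upper bound, I would first describe the discrete dihedral symmetry group of $\Exp^\engel$ analogous to the one for $\Exp^\cartan$ constructed in \cite{Sachkov:2006:dido_discrete_symmetries}. The reflection symmetries $\varepsilon^1$ (center of chord) and $\varepsilon^2$ (perpendicular bisector of chord) generate Maxwell points along each extremal trajectory. Using elliptic coordinates $(\varphi,k,\alpha)$ on $C_1^\engel$ and $C_2^\engel$ and pendulum coordinates on $C_6^\engel$, I would compute the earliest motion time at which one of these symmetries produces a distinct extremal ending at the same point. In the normalization where one elastica period has unit length, I expect this to yield the candidate first Maxwell times $\mathbf{t}_1^z(k)$ (from $\varepsilon^2$) and the constant $1$ (from $\varepsilon^1$) on $C_1^\engel$, while only the constant $1$ arises on $C_2^\engel$ and $C_6^\engel$. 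To ensure that the first Maxwell time is the binding constraint I would then invoke the first conjugate time estimates of \cite{ArdentovSachkov:2013:ConjugateEngel}, which place the first conjugate time no earlier than the first Maxwell time just identified.

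The matching lower bound is the main obstacle. The goal is to prove that for every $t$ strictly below the candidate first Maxwell time, the point $\Exp^\engel(\lambda,t)$ has a unique minimizer from the identity, which forces $\tcut^\engel(\lambda)=\tmax(\lambda)$. The standard approach is a Hadamard-type global inverse function argument applied to the restriction of $\Exp^\engel$ to the complement of the Maxwell and fixed-point sets of the discrete symmetry group. One verifies that (a) the restricted map is a local diffeomorphism, which follows from the conjugate time bound in the previous paragraph; (b) the image is connected and simply connected; and, most delicately, (c) the restricted map is proper. Step (c) requires tracking sequences of covectors to rule out escape to the boundary of the restricted domain, and in analogy with the Cartan case treated in \cref{section-properness} of the present paper, it is the technical heart of the argument.

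Once global injectivity on the restricted domain is established, uniqueness of the minimizer up to the first Maxwell time forces $\tcut^\engel(\lambda)=\tmax(\lambda)$ in each stratum. Rewriting in the dilation-invariant normalization determined by $\delta_{\mu_\lambda}$, so that one elastica period has unit length, then yields the explicit formulas in the statement.
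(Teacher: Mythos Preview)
The paper does not prove this theorem at all: it is stated as a known result from \cite{ArdentovSachkov:2015:CutEngel} and used as input for the Cartan analysis. So there is no ``paper's own proof'' to compare against.

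Your outline is a reasonable summary of the strategy in \cite{ArdentovSachkov:2015:CutEngel}: upper bounds from the discrete symmetries and the conjugate time estimates of \cite{ArdentovSachkov:2013:ConjugateEngel}, followed by a Hadamard-type global diffeomorphism argument on the restricted exponential. One caution, however: you suggest handling properness ``in analogy with the Cartan case treated in \cref{section-properness}'', but the unbounded-time branch of that argument (\cref{lemma-unbdd-time}) rests precisely on the Engel cut time formula you are trying to prove, via \cref{lemma:engel cut time comparison}. For the Engel group there is no lower-dimensional model to compare with, so properness there is established by the direct case-by-case study of the explicit parametrization that the introduction of the present paper alludes to; your sketch should acknowledge that this step requires an independent argument rather than the comparison trick.
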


If the projection of a Cartan extremal trajectory is optimal in the Engel group, then the Cartan trajectory must be optimal as well. Hence we have the inequality
\begin{align}
\tcut^{\cartan} (\lambda^{\cartan}) \geq  \tcut^{\engel} \circ \bpiE (\lambda^{\cartan}), \qquad \forall \lambda^{\cartan} \in C^{\cartan}. \label{tcutC-above-tcutE}
\end{align}

As a consequence of Lemmas \ref{lemma:C_1 root bound}--\ref{lemma:C_2 root bound}, we can bound the conjectured cut times in the Cartan group by the corresponding cut times in the Engel group:
\begin{lemma}\label{lemma:engel cut time comparison}
	There exists a constant $1\leq\zeta<2$ such that
	for every $\lambda^\cartan \in C_1^\cartan\cup C_2^\cartan$, we have
	\begin{equation*}
	\mathbf{t}(\lambda^\cartan)\leq \zeta\cdot\tcut^{\engel}(\lambda^\engel),
	\end{equation*}
	where $\lambda^\engel:= \bpiE (\lambda^\cartan)$.
\end{lemma}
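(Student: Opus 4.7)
\medskip

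\noindent\textbf{Proof plan.} The plan is to normalize via the common dilation factor $\mu_\lambda$ and reduce the claimed inequality to a pointwise inequality between the normalized Maxwell/cut functions in the parameter $k\in[0,1]$. Since in both the $C_1$ and $C_2$ cases the dilation factor $\mu_\lambda$ and the elliptic parameter $k$ used on the Cartan side coincide with those used on the Engel side (after applying $\bpiE$), the scale-invariant ratio to control is
\begin{equation*}
\rho(k) \;=\; \frac{\mu \mathbf{t}(\lambda^\cartan)}{\mu\,\tcut^\engel(\lambda^\engel)}.
\end{equation*}
It suffices to show that $\rho$ is bounded above by some constant $\zeta<2$ uniformly in $k$, with the three separate strata $C_1^\cartan$, $C_2^\cartan$ treated individually.

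\smallskip

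\noindent\textbf{Case $C_1^\cartan$.} Here \cref{thm:engel-cut} gives $\mu\,\tcut^\engel(\lambda^\engel) = \min\{1,\mathbf{t}_1^z(k)\}$, and by \cref{lemma:C_1 root bound} the numerator $\mu \mathbf{t}(\lambda^\cartan) = \mathbf{t}_1(k)$ is piecewise equal to $\mathbf{t}_1^z$ or $\mathbf{t}_1^V$ on the three subintervals $(0,k_1]$, $[k_1,k_0]$, $[k_0,1)$. I split into these three subintervals:
\begin{enumerate}[(i)]
\item On $(0,k_1]$: $\mathbf{t}_1^z(k)>1$, so the denominator equals $1$ and $\rho(k)=\mathbf{t}_1^z(k)<3/2$ by \cref{lemma:C_1 root bound}.
\item On $[k_1,k_0]$: again $\mathbf{t}_1^z(k)>1$, so the denominator equals $1$ and $\rho(k)=\mathbf{t}_1^V(k)$. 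Since $\mathbf{t}_1^V$ is continuous on this compact interval and strictly bounded above by $2$ pointwise, it attains a maximum $M_1<2$.
\item On $[k_0,1)$: $\mathbf{t}_1(k)=\mathbf{t}_1^z(k)\leq 1$ and the denominator is $\mathbf{t}_1^z(k)$ as well, so $\rho(k)=1$.
\end{enumerate}
Hence on $C_1^\cartan$, $\rho(k)\leq \max\{3/2,\,M_1\}<2$.

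\smallskip

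\noindent\textbf{Case $C_2^\cartan$.} By \cref{thm:engel-cut}, $\mu\,\tcut^\engel(\lambda^\engel)=1$, and $\mu \mathbf{t}(\lambda^\cartan)=\mathbf{t}_2(k)=\mathbf{t}_2^V(k)$. By \cref{lemma:C_2 root bound}, $\mathbf{t}_2^V(k)<2$ for each $k\in(0,1)$, and the limits at $k=0$ and $k\to 1$ are both strictly less than $2$ (the latter being $\leq 3/2$ by \cref{lemma-f2v-limit-as-k-to-1}). Continuity of $\mathbf{t}_2^V$ on the closed interval $[0,1]$ therefore gives a maximum $M_2<2$, and $\rho(k)\leq M_2$ on $C_2^\cartan$.

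\smallskip

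\noindent Combining the two cases and setting $\zeta=\max\{3/2,\,M_1,\,M_2\}<2$ finishes the proof. The main potential obstacle is establishing the uniform bounds $M_1,M_2<2$, since \cref{lemma:C_1 root bound} and \cref{lemma:C_2 root bound} only supply pointwise strict inequalities on open sub-intervals; I handle this by invoking continuity of $\mathbf{t}_1^V$ and $\mathbf{t}_2^V$ in $k$ together with the known finite limits at the endpoints, so that the suprema are attained on compact parameter intervals.
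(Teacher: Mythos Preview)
Your argument is correct and follows essentially the same route as the paper: normalize by $\mu_\lambda$, reduce to bounding a ratio depending only on $k$, and then invoke \cref{lemma:C_1 root bound} and \cref{lemma:C_2 root bound}. The only notable difference is in the $C_1^\cartan$ case: the paper avoids your subinterval split and the compactness argument for $\mathbf{t}_1^V$ by observing at once that $\mathbf{t}_1(k)=\min\{\mathbf{t}_1^z(k),\mathbf{t}_1^V(k)\}\leq \mathbf{t}_1^z(k)$, so the ratio is always $\leq \max_{k}\mathbf{t}_1^z(k)\leq 3/2$ directly from \cref{lemma:C_1 root bound}. This spares you the need to invoke continuity of $\mathbf{t}_1^V$ (which you assert but do not justify), and it yields the explicit $\zeta=\max\{\max_k \mathbf{t}_1^z(k),\max_k \mathbf{t}_2(k)\}$.
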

\begin{proof}
	Since $\bpiE(C_i^\cartan)\subset C_i^\engel$, we find that
	\begin{equation*}
	\frac{\mathbf{t}(\lambda^\cartan)}{\tcut^{\engel}(\lambda^\engel)} = \begin{dcases}
	\frac{\min\big\{\mathbf{t}_1^z (k),\mathbf{t}_1^V(k)\big\}}{\min\big\{\mathbf{t}_1^z (k),1\big\}},&\lambda\in C_1^\cartan,\\
	\mathbf{t}_2(k),&\lambda\in C_2^\cartan.
	\end{dcases}
	\end{equation*}
	Therefore, if we set 
	\begin{equation*}
	\zeta=\max\Big\{ \max_{k\in[0,1]} \mathbf{t}_1^z(k), \max_{k\in[0,1]} \mathbf{t}_2(k) \Big\},
	\end{equation*}
	the inequality $\mathbf{t}(\lambda^\cartan)\leq \zeta\cdot\tcut^{\engel}(\lambda^\engel)$ follows, so it remains to show that $\zeta<2$.
	
	This bound follows from the earlier estimates on $\mathbf{t}_1$ and $\mathbf{t}_2$. Namely, we get the bounds $\max_{k\in[0,1]} \mathbf{t}_1^z(k)\leq \frac{3}{2}$ and $\max_{k\in[0,1]} \mathbf{t}_2(k)< 2$ from \cref{lemma:C_1 root bound} and \cref{lemma:C_2 root bound}.
\end{proof}

\section{Properness of the sub-Riemannian exponential map}\label{section-properness}

\begin{definition}
	A map $f\colon X\to Y$ 
	is \emph{proper} if $f^{-1}(K)\subset X$ is compact for any compact set $K\subset Y$.
\end{definition}

The goal of this section is to prove the following result.

\begin{proposition}\label{prop-exp-is-proper}
	The restriction $\Exp^{\cartan}\colon \preimagedomain\to \imagedomain$ of the sub-Riemannian exponential is a proper map.
\end{proposition}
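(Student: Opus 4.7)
The plan is to take a compact $K \subset \imagedomain$ and show that every sequence $(\lambda_n,t_n) \in \preimagedomain$ with $q_n := \Exp^\cartan(\lambda_n,t_n) \in K$ has a subsequence converging in $\preimagedomain$. Writing $\lambda_n = (\theta_n,c_n,\alpha_n,\beta_n)$, the circle coordinates $\theta_n,\beta_n \in S^1$ are automatically precompact, so after a first extraction $\theta_n \to \theta_\infty$ and $\beta_n \to \beta_\infty$; after a further subsequence, all $\lambda_n$ lie in a single cylinder component $C_i^\cartan$. The central task is then to bound the pendulum parameters $(\alpha_n,c_n)$ and the time $t_n$.

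On the components $C_3^\cartan$, $C_6^\cartan$, and $C_{457}^\cartan$ the extremal trajectories are critical elasticae, circles, or straight lines with sufficiently explicit parametrizations that the condition $q_n \in K$ directly bounds both the parameters and $t_n$. The essential work is on $C_1^\cartan$ and $C_2^\cartan$. Here the plan is to reduce to the Engel situation via the projection $\bpiE$: \cref{lemma:engel cut time comparison} gives $t_n \le \mathbf{t}(\lambda_n) \le \zeta \cdot \tcut^\engel(\bpiE(\lambda_n))$, while the projected endpoints $\bpiE(q_n)$ remain in the compact set $\bpiE(K) \subset \engel$. Invoking the Engel analog of properness, a consequence of the explicit cut-time formulas in \cref{thm:engel-cut} from \cite{ArdentovSachkov:2015:CutEngel}, I obtain a bound on $\bpiE(\lambda_n)$ and thereby on $(\alpha_n,c_n)$, together with a bound on $t_n$ via $\tcut^\engel$.

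Once a convergent subsequence $(\lambda_n,t_n) \to (\lambda_\infty,t_\infty)$ has been extracted, continuity of $\Exp^\cartan$ yields $q := \Exp^\cartan(\lambda_\infty,t_\infty) \in K \subset \imagedomain$. Since $K \cap \cartan' = \emptyset$, \cref{lemma:inclusions} ensures $(\lambda_\infty,t_\infty) \notin \FIX \cup \MAX$. The inequality $t_\infty \le \mathbf{t}(\lambda_\infty)$ follows by passing to the limit in $t_n \le \mathbf{t}(\lambda_n)$, using upper semicontinuity of $\mathbf{t}$ together with the convention $\mathbf{t} \equiv \infty$ on $C_3^\cartan \cup C_{457}^\cartan$. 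Combined, these two facts place the limit in $\optimalpreimage \setminus (\FIX \cup \MAX) = \preimagedomain$.

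The principal obstacle is the parameter and time bound in the main case, particularly when $\lambda_n$ approaches a critical elastica (i.e., $k_n \to 1$ in $C_1^\cartan$) or a straight line; in these limits $\mathbf{t}(\lambda_n) \to \infty$, so the naive bound $t_n \le \mathbf{t}(\lambda_n)$ alone is inadequate. The two auxiliary lemmas of this section, anticipated in the introduction—one treating the \emph{abnormal limit} by continuity of the parametrization, the other treating the \emph{unbounded-time} case by Engel reduction—are designed precisely to close these two gaps.
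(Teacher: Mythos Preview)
Your outline correctly identifies the two key regimes (bounded versus unbounded $t_n$) and correctly defers to the two auxiliary lemmas of this section to handle them, so the global architecture matches the paper's. However, your central paragraph---the one that attempts to bound $(\alpha_n,c_n)$ and $t_n$ by invoking an ``Engel analog of properness''---does not work as stated, and the paper's argument is organized differently.

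The problem with the Engel-properness step is that neither hypothesis of any such statement is met. First, you only know $t_n \le \zeta\cdot\tcut^\engel(\bpiE(\lambda_n))$, so $t_n$ may well exceed $\tcut^\engel(\bpiE(\lambda_n))$; the pair $(\bpiE(\lambda_n),t_n)$ therefore need not lie in the Engel analogue of $\preimagedomain$. Second, $\bpiE(K)$ is compact in $\engel$, but there is no reason it should avoid the Engel analogue of $\cartan'$: the function $V$ depends on the coordinate $w$, which is destroyed by $\pi_\engel$. So Engel properness (even if quotable) gives you nothing here. The paper bypasses this completely. In \cref{lemma-unbdd-time} it argues directly that for $(\lambda_n,t_n)\in\preimagedomain$ with $t_n\to\infty$, the triangle inequality together with optimality on $[0,\tcut^\engel(\lambda_n^\engel)]$ and the bound $t_n\le\zeta\cdot\tcut^\engel(\lambda_n^\engel)$ forces $d(\Id,q_n)\ge\frac{2-\zeta}{\zeta}\,t_n\to\infty$. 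Only the Engel cut-time formula and $\zeta<2$ are used; no Engel properness.

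Organizationally the paper also avoids your case split over $C_3^\cartan$, $C_6^\cartan$, $C_{457}^\cartan$. Instead of handling those components by hand, it works with escaping sequences (the contrapositive formulation), uses the density of $C_1^\cartan\cup C_2^\cartan$ in $C^\cartan$ (\cref{lemma:proper on dense subset}) and the rotation symmetry (\cref{lemma:reduction to beta=0}) to reduce to $\lambda_n\in C_1^\cartan\cup C_2^\cartan$ with $\beta_n=0$. The bounded-time case (\cref{lemma:abnormal limit}) is then settled by extracting a limit trajectory via Arzel\`a--Ascoli: if the covectors $\lambda_n$ stay bounded, the limit pair lies in $\MAX\cup\FIX$ and hence maps into $\cartan'$; if not, rescaling the PMP identity produces an \emph{abnormal} covector in the limit, so the limit trajectory lies in $\{z=0\}\subset\cartan'$. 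This also sidesteps the upper-semicontinuity of $\mathbf{t}$ that your last paragraph needs but does not justify.
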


For the proof of properness, the following notion is convenient.

\begin{definition}
	Let $X$ be a topological space. A sequence $(x_j)_{j\in\NN} \in X$ is said to be \emph{escaping} if it eventually exits any compact set. That is, for any compact set $K\subset X$, there exists $j_0\in\NN$ such that for $j\geq j_0$, we have $x_j\in X\setminus K$.
\end{definition}

Recall that in metric spaces properness is characterized by preserving escaping sequences. That is, if $f\colon X\to Y$ is a continuous map between metric spaces $X$ and $Y$, then $f$ is proper if and only if $f(x_j)_{j\in\NN}\subset Y$ is escaping for every escaping sequence $(x_j)_{j\in\NN}\subset X$.

\begin{remark}
	When referring to escaping sequences, we use the notation $(\lambda_n,t_n)\to\partial \preimagedomain$ and $\Exp^{\cartan}(\lambda_n,t_n)\to\partial \imagedomain$. The boundaries $\partial \preimagedomain$ and $\partial \imagedomain$ are understood inside the one-point compactifications of $C^\cartan\times [0,\infty)$ and $\cartan$ respectively, in order to also handle the case when $(\lambda_n,t_n)\to\infty$.
\end{remark}

The proof of \cref{prop-exp-is-proper} is given in \cref{section-proof-of-properness} by considering two types of escaping sequences $(\lambda_n,t_n)\to\partial\preimagedomain$. 

The first case is when the sequence $(t_n)_{n\in\NN}$ stays bounded. Then the claim that $\Exp(\lambda_n,t_n)\to\partial\imagedomain$ will follow by continuity by considering a (possibly abnormal) limit of the corresponding trajectories $t\mapsto \Exp(\lambda_n,t)$.

The second case is when $(t_n)_{n\in\NN}$ is instead unbounded. Then the proof is more involved, and follows by a comparison with the known cut times in the Engel case. To make this comparison easier, we consider two simplifications in \cref{section-reduction-to-C1-and-C2}. First, we reduce to the dense subset of the points $(\lambda,t)\in \preimagedomain$ with $\lambda\in C_1^{\cartan}\cup C_2^{\cartan}$. Second, using the rotational symmetry of the sub-Riemannian exponential map, we further reduce to $\lambda=(\theta,c,\alpha,\beta)\in C_1^{\cartan}\cup C_2^{\cartan}$ with $\beta=0$.

\subsection{Reduction to rotated generic elasticae}\label{section-reduction-to-C1-and-C2}
A priori we have to consider escaping sequences $(\lambda_n,t_n)\to \partial \preimagedomain$ with arbitrary $\lambda_n\in C^{\cartan}$. However, since $C_1^{\cartan}\cup C_2^{\cartan}$ is dense in $C^{\cartan}=C_1^{\cartan}\cup\dots\cup C_7^{\cartan}$, such sequences are well approximated by escaping sequences with $\lambda_n\in C_1^{\cartan}\cup C_2^{\cartan}$. More precisely, we have the following lemma.

\begin{lemma}\label{lemma:proper on dense subset}
	Suppose $X$ and $Y$ are boundedly compact metric spaces, $f\colon X\to Y$ is a continuous map, and $U\subset X$ is a dense subset.
	Assume that if $(u_j)_{j\in\NN}\subset X$ is an escaping sequence and $u_j\in U$ for all $j\in\NN$, then the sequence $f(u_j)_{j\in\NN}\subset Y$ is also escaping. Then $f$ is proper.
\end{lemma}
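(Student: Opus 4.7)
The plan is to argue the contrapositive, using the characterization of properness in metric spaces recalled just before the statement: it suffices to show that for every escaping sequence $(x_n)\subset X$, the image $(f(x_n))\subset Y$ is also escaping. So I would assume that some escaping $(x_n)\subset X$ has a non-escaping image, and derive a contradiction with the hypothesis by manufacturing an escaping sequence inside $U$ whose image is bounded.

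First I would extract subsequences: since $(f(x_n))$ fails to escape, some compact set $K\subset Y$ contains $f(x_n)$ for infinitely many $n$, and by compactness of $K$ I may pass to a further subsequence with $f(x_n)\to y\in K$; the sequence $(x_n)$ of course remains escaping along any subsequence. Then I would approximate $(x_n)$ by a sequence in $U$ chosen to preserve both properties. For each $n$, continuity of $f$ at $x_n$ supplies some $\delta_n>0$ with $d(x,x_n)<\delta_n\Rightarrow d(f(x),f(x_n))<1/n$, and density of $U$ lets me pick $u_n\in U$ with $d(u_n,x_n)<\min\{\delta_n,1/n\}$. A triangle inequality then gives $f(u_n)\to y$, so $(f(u_n))$ is not escaping.

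It remains to check that $(u_n)$ is escaping in $X$. Bounded compactness of $X$ makes escaping equivalent to $d(x_0,u_n)\to\infty$ for any fixed basepoint $x_0$, and this follows from $d(x_0,u_n)\geq d(x_0,x_n)-1/n$ together with $d(x_0,x_n)\to\infty$. Hence $(u_n)\subset U$ is escaping while $(f(u_n))$ is not, contradicting the hypothesis and proving $f$ proper. The one subtle point I anticipate is precisely the choice of the approximation scale: because $f$ is only assumed continuous, not uniformly continuous, merely picking $u_n\in U$ within $1/n$ of $x_n$ is not enough to transfer the convergence $f(x_n)\to y$ to $f(u_n)\to y$. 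The approximation must be adapted to the pointwise continuity modulus $\delta_n$ of $f$ at $x_n$, and this is the only non-routine idea in the proof.
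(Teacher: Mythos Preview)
Your proof is correct and follows essentially the same approach as the paper: both arguments pick $u_n\in U$ close enough to $x_n$ that simultaneously $d_X(x_n,u_n)$ and $d_Y(f(x_n),f(u_n))$ are small (the paper phrases this as $d_X(x_j,u_j)<1/j$ and $d_Y(f(x_j),f(u_j))<1/j$), then use bounded compactness to transfer the escaping property between $(x_n)$ and $(u_n)$. The only cosmetic differences are that the paper argues directly rather than by contradiction and invokes bounded compactness via the enlarged compact set $\overline{B}(K,1)$ instead of the distance-to-basepoint criterion; incidentally, your contrapositive framing never actually uses bounded compactness of $Y$.
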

\begin{proof}
	Let $(x_j)_{j\in\NN}\subset X$ be any escaping sequence. We need to verify that $f(x_j)_{j\in\NN}\subset Y$ is also escaping.
	
	By continuity of the function $f$ and denseness of the subset $U\subset X$, there exist points $u_j\in U$ such that
	\begin{equation}\label{eq:defn of approx seq}
	d_X(x_j,u_j)<1/j\quad\text{and}\quad
	d_Y(f(x_j),f(u_j))<1/j\quad\forall j\in\NN.
	\end{equation}
	If $K\subset X$ is any compact set, then by bounded compactness of $X$, also the set 
	\begin{equation*}
	\overline{B}(K,1) = \{x\in X \mid d(x,K)\leq 1\}
	\end{equation*}
	is compact. If for some $j\in\NN$, we have $x_j\notin \overline{B}(K,1)$, then $u_j\notin K$ by \eqref{eq:defn of approx seq}. Therefore the assumption that $(x_j)_{j\in\NN}\subset X$ is escaping implies that $(u_j)_{j\in\NN}\subset X$ is escaping.
	
	By the assumption of the lemma, the sequence $f(u_j)_{j\in\NN}$ is escaping. Arguing exactly as before with the role of $x_j$ and $u_j$ taken by $f(u_j)$ and $f(x_j)$, we see that bounded compactness of $Y$ and \eqref{eq:defn of approx seq} imply that $f(x_j)_{j\in\NN}$ is escaping.
\end{proof}

\begin{lemma}\label{lemma:reduction to beta=0}
	Suppose that $\Exp^{\cartan}(\theta_n,c_n,\alpha_n,0,t_n)\to\partial \imagedomain$ for any escaping sequence $(\theta_n,c_n,\alpha_n,0,t_n)\to \partial \preimagedomain$ in $\preimagedomain$.
	Then $\Exp^{\cartan}(\lambda_n,t_n)\to\partial \imagedomain$ for any escaping sequence $(\lambda_n,t_n)\to \partial \preimagedomain$ in $\preimagedomain$.
\end{lemma}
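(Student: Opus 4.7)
The strategy is to exploit the rotation symmetries $R_\eta$ of $\Exp^{\cartan}$ to transform an arbitrary escaping sequence into one whose covectors have $\beta = 0$. Given an escaping sequence $(\lambda_n, t_n) \to \partial \preimagedomain$ with $\lambda_n = (\theta_n, c_n, \alpha_n, \beta_n)$, I set $\tilde{\lambda}_n := R_{\beta_n}(\lambda_n) = (\theta_n - \beta_n, c_n, \alpha_n, 0)$. The plan is then to show that $(\tilde{\lambda}_n, t_n)$ is itself an escaping sequence in $\preimagedomain$, apply the hypothesis to obtain $\Exp^{\cartan}(\tilde{\lambda}_n, t_n) \to \partial \imagedomain$, and finally transfer this escaping behaviour back to $(\lambda_n, t_n)$ via $R_{-\beta_n}$.

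As a prerequisite I would check that $R_\eta$ preserves $\preimagedomain$ and $\imagedomain$. By \cref{lemma:inclusions}, $\preimagedomain = \{(\lambda,t)\in \widehat{N} : \Exp^{\cartan}(\lambda,t)\notin \cartan'\}$, so this reduces to two invariance statements. First, the coordinate $z$ and the polynomial $V = xv + yw - (x^2+y^2)z/2$ are $R_\eta$-invariant: the pairs $(x,y)$ and $(v,w)$ undergo simultaneous rotation by the same angle, so $xv+yw$ and $x^2+y^2$ are preserved, while $z$ is untouched. Hence $R_\eta$ preserves $\cartan'$ and $\imagedomain$. Second, $\mathbf{t}(\lambda)$ depends on $\lambda$ only through $\alpha$, $c$, and the energy $E = \tfrac{c^2}{2} - \alpha\cos(\theta-\beta)$, and since $R_\eta$ shifts both $\theta$ and $\beta$ by the same angle, all three quantities are preserved, so $R_\eta$ preserves $\widehat{N}$ and therefore $\preimagedomain$.

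The core of the proof is a pair of compactness transfers, both relying on $\beta_n$ lying in the compact circle $S^1$. If $(\tilde{\lambda}_n, t_n)$ failed to escape, I would extract a subsequence converging in $\preimagedomain$ and then a further subsequence with $\beta_n \to \beta_\infty \in S^1$; continuity of $R_{-\beta_n}$ in $\beta_n$ would then yield a convergent subsequence of $\lambda_n = R_{-\beta_n}(\tilde{\lambda}_n)$ in $\preimagedomain$, contradicting the escaping hypothesis on $(\lambda_n, t_n)$. The hypothesis of the lemma now gives $\Exp^{\cartan}(\tilde{\lambda}_n, t_n) \to \partial \imagedomain$. The same argument in reverse applies on the image side: if $\Exp^{\cartan}(\lambda_n, t_n)$ had a subsequential limit in $\imagedomain$, then using the identity $\Exp^{\cartan}(\tilde{\lambda}_n, t_n) = R_{\beta_n}(\Exp^{\cartan}(\lambda_n, t_n))$ together with compactness of $S^1$ and invariance of $\imagedomain$ under $R_\eta$, we would obtain a convergent subsequence of $\Exp^{\cartan}(\tilde{\lambda}_n, t_n)$ in $\imagedomain$, contradicting what was just established. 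No significant obstacle is expected; the only technicalities are the invariance checks for $V$ and $\mathbf{t}$ under $R_\eta$, which are routine.
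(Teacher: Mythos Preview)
Your proposal is correct and follows essentially the same approach as the paper: rotate each $(\lambda_n,t_n)$ by $R_{\beta_n}$ to reduce to the $\beta=0$ case, and use the rotation-invariance of $z$ and $V$ (hence of $\imagedomain$ and $\preimagedomain$) to transfer the escaping property back and forth. Your subsequence arguments via compactness of $S^1$ simply make explicit what the paper asserts more tersely when it says the rotated sequence is escaping and that $R_{-\beta_n}$ preserves $\partial\imagedomain$.
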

\begin{proof}
	If $(\lambda_n,t_n)=(\theta_n,c_n,\alpha_n,\beta_n,t_n)\to \partial \preimagedomain$ is an escaping sequence, so is the rotated sequence $R_{\beta_n}(\lambda_n,t_n)_{n\in\NN}$. Since $R_{\beta_n}(\lambda_n,t_n)=(\theta_n-\beta_n,c_n,\alpha_n,0,t_n)$, the assumption of the lemma implies that $\Exp^{\cartan}(\theta_n-\beta_n,c_n,\alpha_n,0,t_n)\to\partial \imagedomain$. 
	
	Rotations preserve both the coordinates $z$ and $V$, so the set $\imagedomain$ and its boundary $\partial\imagedomain$ are invariant under the rotations.
	It follows that
	\begin{equation*}
	\Exp^{\cartan}(\lambda_n,t_n) = R_{-\beta_n}\circ \Exp^{\cartan}(\theta_n-\beta_n,c_n,\alpha_n,0,t_n) \to \partial\imagedomain.
	\end{equation*}
\end{proof}

\subsection{Proof of properness}\label{section-proof-of-properness}
We will next conclude the proof of \cref{prop-exp-is-proper} that the restriction $\Exp^{\cartan}\colon\preimagedomain\to\imagedomain$ of the sub-Riemannian exponential in the Cartan group is proper.
The proof splits into two cases based on boundedness of $(t_n)_{n\in\NN}$.

In the bounded time case, we show that the sequence is escaping by considering a limiting trajectory of the extremal trajectories $t\mapsto \Exp(\lambda_n,t)$.

\begin{lemma}\label{lemma:abnormal limit}
	Suppose $(\lambda_n,t_n)_{n\in\NN}\subset \preimagedomain$ is an escaping sequence with $(t_n)_{n\in\NN}$ bounded. Then any limit point of the sequence $\Exp^{\cartan}(\lambda_n,t_n)$ is contained in $\cartan'=\{zV=0\}$.
\end{lemma}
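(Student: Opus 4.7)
The plan is to pass to a subsequence along which the full trajectories $\gamma_n := \Exp^\cartan(\lambda_n,\cdot)$ converge, and to argue that the endpoint of the limit curve lies in the closed set $\cartan' = \{zV=0\}$. First I would fix $T < \infty$ bounding $(t_n)_{n\in\NN}$ and consider $\gamma_n \colon [0,T] \to \cartan$. Since these are arc-length parametrized with $\gamma_n(0) = \Id$, they are equi-$1$-Lipschitz and uniformly bounded, so by Arzel\`a--Ascoli a subsequence converges uniformly to a horizontal $1$-Lipschitz curve $\gamma$. After further subsequences, $t_n \to t^* \in [0,T]$, and either $\lambda_n \to \lambda^* \in C^\cartan$ or $\lambda_n$ diverges in $C^\cartan$; in both scenarios $\gamma_n(t_n) \to \gamma(t^*)$, so the task reduces to showing $\gamma(t^*) \in \cartan'$.

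In the finite-limit case, $\gamma = \Exp^\cartan(\lambda^*,\cdot)$ by continuity of the exponential mapping. Here I would observe that, by \cref{lemma:inclusions}, the set $\FIX \cup \MAX = (\Exp^\cartan)^{-1}(\cartan')$ is closed in $C^\cartan \times [0,\infty)$, so $\preimagedomain$ is relatively open in $\widehat{N}$. The escape condition then forces $(\lambda^*, t^*) \notin \preimagedomain$; meanwhile lower semicontinuity of $\mathbf{t}$, apparent from the explicit formulas recalled in \cref{subsec-cartan}, keeps $(\lambda^*, t^*) \in \widehat{N}$. Hence $(\lambda^*, t^*) \in \FIX \cup \MAX$, and a second application of \cref{lemma:inclusions} gives $\gamma(t^*) \in \cartan'$, as required.

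In the blow-up case, $\gamma$ cannot be a normal extremal, and I would rely on the scaling symmetry to force the endpoint to $\Id \in \cartan'$. For $\lambda_n$ in the generic strata $C_1^\cartan \cup C_2^\cartan \cup C_6^\cartan$, let $\mu_n := \mu_{\lambda_n}$ be the normalizing dilation factor of \cref{subsec-cartan}. Then $\delta_{\mu_n}\lambda_n$ is bounded in $C^\cartan$ and, by \cref{lemma:C_1 root bound,lemma:C_2 root bound}, the rescaled time satisfies $\mu_n t_n \leq \mu\mathbf{t}(\lambda_n) < 2$. The scaling identity
\begin{equation*}
\Exp^\cartan(\lambda_n, t_n) = \delta_{1/\mu_n} \Exp^\cartan(\delta_{\mu_n}\lambda_n, \mu_n t_n)
\end{equation*}
then writes the endpoint as $\delta_{1/\mu_n}$ of a bounded sequence of points in $\cartan$; since $\mu_n \to \infty$ and $\delta_{1/\mu_n}$ contracts bounded sets to $\Id$, we obtain $\Exp^\cartan(\lambda_n, t_n) \to \Id \in \cartan'$. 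Trajectories with $\lambda_n \in C_{457}^\cartan \cup C_7^\cartan$ are straight lines, which by direct integration of \eqref{dx}--\eqref{dw} satisfy $z \equiv V \equiv 0$ and hence already lie in $\cartan'$; the critical stratum $C_3^\cartan$ is a limit of the generic strata and can be handled by approximation using closedness of $\cartan'$. The hard part will be precisely this blow-up case, where the Hamiltonian system has no finite limit and it is only the Maxwell-time bounds of \cref{lemma:C_1 root bound,lemma:C_2 root bound} that make the dilation argument work.
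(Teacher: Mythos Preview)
Your treatment of the finite-limit case is correct and essentially coincides with the paper's. The blow-up case, however, contains a genuine error: the assertion $\mu_n\to\infty$ is false. Indeed, from $t_n\le\mathbf t(\lambda_n)=\mathbf t_i(k_n)/\mu_n<2/\mu_n$ (via \cref{lemma:C_1 root bound,lemma:C_2 root bound}) you get $\mu_n<2/t_n$, so whenever $t_n\to t^*>0$ the dilation factor $\mu_n$ is \emph{bounded}, and $\delta_{1/\mu_n}$ does not contract. In fact, in the $C_1^\cartan$ blow-up this forces $K(k_n)\to\infty$, i.e.\ $k_n\to 1$: you are approaching the critical case, but with $\alpha_n\to\infty$, so you are not approaching any point of $C_3^\cartan$ and the ``approximation'' remark does not help. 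The claim that $\delta_{\mu_n}\lambda_n$ stays bounded in $C^\cartan$ likewise fails in this regime.

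The paper's argument for the blow-up case is different and does not rely on the dilation structure of the state space. Instead it rescales the \emph{covectors}: choose $a_n\to 0$ so that $a_n\lambda_n\to\lambda_\infty\neq 0$, multiply the PMP identity
\[
\lambda_n\Big(\int_{-T}^{T}\Ad_{q_n(t)}\mathbf u(t)\,dt\Big)-\langle\mathbf u_n,\mathbf u\rangle=0
\]
by $a_n$, and pass to the limit using the Arzel\`a--Ascoli trajectory $q_\infty$ you already produced. The limit trajectory then satisfies the PMP for the abnormal pair $(0,\lambda_\infty)$, hence is a horizontal line and lies entirely in $\{z=V=0\}\subset\cartan'$. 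Your opening observation that ``$\gamma$ cannot be a normal extremal'' is exactly the right intuition; the missing step is to turn it into the rescaled PMP computation rather than the dilation argument.
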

\begin{proof}
	
	Fix $T>0$ such that $t_n\in [-T,T]$ for all $n\in \NN$. Consider the family of normal trajectories $q_n\colon [-T,T]\to \cartan$, $q_n(t)=\Exp^{\cartan}(\lambda_n,t)$ with controls $\mathbf{u}_n$. By construction, these satisfy the PMP for the normal covector pair $(-1,\lambda_n)\in (-\infty,0]\times T_{\Id}^*\cartan$. That is, for all controls $\mathbf{u}$, we have
	\begin{equation}\label{eq:pmp}
	\lambda_n\left( \int_{-T}^{T}\Ad_{q_n(t)}\mathbf{u}(t)\,dt \right) - \left<\mathbf{u}_n,\mathbf{u}\right> = 0.
	\end{equation}
	
	Let $\qone\in \cartan$ be a limit point of the sequence of points $\Exp^{\cartan}(\lambda_n,t_n)\in \cartan$. Up to taking a subsequence, we may assume that $\Exp^{\cartan}(\lambda_n,t_n)\to \qone$. Since the trajectories $q_n$ are all 1-Lipschitz curves through $q_n(0)=\identity$, up to taking a further subsequence, we may assume by Arzelà-Ascoli that there exists a limit trajectory $q_\infty\colon[-T,T]\to \cartan$ such that $q_n\to q_\infty$ uniformly.
	
	If the sequence $(\lambda_n)_{n\in\NN}$ of covectors is bounded in $C^\cartan$, there exists a limit point $(\lambda_n,t_n)\to(\bar{\lambda},\bar{t})\in \optimalpreimage$. 
	The assumption that $(\lambda_n,t_n)_{n\in\NN}$ is escaping in $\preimagedomain$ implies that $(\bar{\lambda},\bar{t})\in \optimalpreimage\setminus \preimagedomain = \MAX \cup \FIX$.
	By Lemma~\ref{lemma:inclusions}, $\qone=\Exp(\bar{\lambda},\bar{t})\in \cartan'$.
	
	Suppose instead that the sequence $(\lambda_n)_{n\in\NN}$ of covectors is unbounded in $C^\cartan$. Let $(a_n)_{n\in\NN}\subset(0,\infty)$ be a sequence such that there exists a finite non-zero limit
	$\lambda_\infty:=\lim\limits_{n\to\infty}a_n\lambda_n\in T_{\Id}^*\cartan$. The assumption that $(\lambda_n)_{n\in\NN}$ is unbounded implies that necessarily $a_n\to 0$.
	
	Rescaling \eqref{eq:pmp} by the factors $a_n\in(0,\infty)$, each trajectory $q_n$ satisfies the PMP for the normal pair $(-a_n,a_n\lambda_n)\in(-\infty,0]\times T_{\Id}^*\cartan$. That is, for all controls $\mathbf{u}$, we have
	\begin{equation*}
	a_n\lambda_n\left( \int_{-T}^{T}\Ad_{q_n(t)}\mathbf{u}(t)\,dt \right) - a_n\left<\mathbf{u}_n,\mathbf{u}\right> = 0.
	\end{equation*}
	By continuity, we conclude that the limit trajectory $q_\infty$ satisfies the PMP for the abnormal pair $(0,\lambda_\infty)\in(-\infty,0]\times T_{\Id}^*\cartan$.
	
	Since the only abnormal curves are horizontal lines, we see that $q_\infty$ is contained in $\{z=V=0\}$. Finally, since $t_n\in [-T,T]$ for all $n\in \NN$, by uniform convergence, we conclude that
	\begin{equation*}
	\qone = \lim\limits_{n\to\infty}q_n(t_n)
	= q_\infty(\lim\limits_{n\to\infty}t_n),
	\end{equation*}
	so the limit point $\qone$ is contained in $\cartan'$.
\end{proof}

In the unbounded time case, we show that the sequence is escaping by comparing to the distance on the Engel group.

\begin{lemma}\label{lemma-unbdd-time}
	Let $(\lambda_n^{\cartan},t_n)_{n\in\NN}\subset \preimagedomain$ be a sequence with $t_n\to\infty$. Then $d(\identity,\Exp^{\cartan}(\lambda_n^{\cartan},t_n))_{n\in\NN}\to\infty$.
\end{lemma}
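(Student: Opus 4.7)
The plan is to push down to the Engel group and exploit the strict inequality $\zeta < 2$ from \cref{lemma:engel cut time comparison}, together with the triangle inequality, to obtain a lower bound on $d^{\cartan}(\identity, q_n)$ where $q_n := \Exp^{\cartan}(\lambda_n^{\cartan}, t_n)$.

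First, I would handle the degenerate cases directly: if $\lambda_n^{\cartan} \in C_3^{\cartan} \cup C_{457}^{\cartan}$, then the extremal is globally optimal and $d^{\cartan}(\identity, q_n) = t_n \to \infty$ trivially. The case $\lambda_n^{\cartan} \in C_6^{\cartan}$ is analogous to the generic case below, since the normalized ratio $\mathbf{t}(\lambda_n^{\cartan})/\tcute(\bpiE(\lambda_n^{\cartan})) = \mathbf{t}_2(0)$ is strictly below $2$ by \cref{lemma:C_2 root bound}. The remaining case is $\lambda_n^{\cartan} \in C_1^{\cartan} \cup C_2^{\cartan}$, which is where the main work takes place.

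From $\preimagedomain \subset \optimalpreimage$ I get $t_n \leq \mathbf{t}(\lambda_n^{\cartan})$, and \cref{lemma:engel cut time comparison} then yields
\[
t_n \leq \mathbf{t}(\lambda_n^{\cartan}) \leq \zeta \tau_n, \qquad \tau_n := \tcute(\bpiE(\lambda_n^{\cartan})), \qquad \zeta < 2.
\]
Since $t_n \to \infty$, necessarily $\tau_n \to \infty$ as well. Consider the projected trajectory $\gamma_n(t) := \bpiE(\Exp^{\cartan}(\lambda_n^{\cartan}, t))$, a unit-speed Engel geodesic that is minimizing on $[0, \tau_n]$, so that $d^{\engel}(\identity, \gamma_n(\tau_n)) = \tau_n$. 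The triangle inequality in the Engel group combined with the unit-speed bound $d^{\engel}(\gamma_n(\tau_n), \gamma_n(t_n)) \leq |t_n - \tau_n|$ then gives
\[
d^{\engel}(\identity, \gamma_n(t_n)) \geq \tau_n - |t_n - \tau_n| \geq \min\{t_n, (2 - \zeta)\tau_n\} \to \infty.
\]

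To conclude, the map $\bpiE = \pi_{\engel} \circ R_{\beta_n - \pi/2}$ is 1-Lipschitz between the sub-Riemannian distances: $R_{\beta_n - \pi/2}$ is a Cartan isometry fixing $\identity$, and $\pi_{\engel}$ sends horizontal curves to horizontal curves with identical controls $(u_1, u_2)$, hence identical length. Therefore
\[
d^{\cartan}(\identity, q_n) \geq d^{\engel}(\identity, \bpiE(q_n)) = d^{\engel}(\identity, \gamma_n(t_n)) \to \infty.
\]
The main obstacle is the strict inequality $\zeta < 2$ in \cref{lemma:engel cut time comparison}: it is precisely what prevents the bound $2\tau_n - t_n$ from vanishing in the regime where $t_n$ overshoots the Engel cut time. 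Were $\zeta = 2$, the triangle-inequality bound could collapse to zero and the argument would fail.
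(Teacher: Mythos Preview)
Your proof is correct and follows essentially the same approach as the paper. The only organizational difference is that you apply the triangle inequality in the Engel group and then lift the lower bound to $\cartan$ via the $1$-Lipschitz property of $\bpiE$, whereas the paper applies the triangle inequality directly in $\cartan$, using the inequality $\tcutc(\lambda^{\cartan}) \geq \tcute(\bpiE(\lambda^{\cartan}))$ (which is itself a consequence of the same $1$-Lipschitz projection) to conclude that $q_n$ is already Cartan-optimal on $[0,\tau_n]$; your explicit treatment of the strata $C_3^{\cartan}\cup C_{457}^{\cartan}\cup C_6^{\cartan}$ is in fact slightly more careful than the paper, which silently relies on \cref{lemma:engel cut time comparison} even though that lemma is only stated for $C_1^{\cartan}\cup C_2^{\cartan}$.
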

\begin{proof}
	By \cref{lemma:engel cut time comparison}, there exists a constant $\zeta\in[1,2)$ independent of $\lambda_n^{\cartan}$ such that $\mathbf{t}(\lambda_n^{\cartan}) \leq \zeta\cdot \tcut^{\engel}(\lambda_n^{\engel})$, where $\lambda_n^{\engel} = \bpiE (\lambda_n^{\cartan})$. By assumption $(\lambda_n^{\cartan},t_n)\in \preimagedomain$, so 
	\begin{equation*}
	t_n<\mathbf{t}(\lambda_n^{\cartan})\leq \zeta\cdot\tcut^{\engel}(\lambda_n^{\engel}).
	\end{equation*}
	Denote the trajectories by $q_n(t) := \Exp^{\cartan}(\lambda_n^{\cartan},t)$ for short.
	
	If $\tcut^{\engel}(\lambda_n^{\engel})<t_n\leq \zeta\cdot \tcut^{\engel}(\lambda_n^{\engel})$, then, by the triangle inequality, we can bound
	\begin{equation*}
	d\big(\identity,q_n(t_n)\big) \geq d\Big(\identity,q_n\big(\tcut^{\engel}(\lambda_n^{\engel})\big)\Big) - d\Big(q_n\big(\tcut^{\engel}(\lambda_n^{\engel})\big),q_n(t_n)\Big).
	\end{equation*}
	Since the trajectories $q_n(t)$ are 1-Lipschitz and optimal on the interval $[0,\tcut^{\engel}(\lambda_n^{\engel})]$, the above can be further estimated by
	\begin{equation*}
	d\big(\identity,q_n(t_n)\big) \geq (2-\zeta)\,\tcut^{\engel}(\lambda_n^{\engel}) \geq \frac{(2-\zeta)}{\zeta}t_n.
	\end{equation*}
	On the other hand, if $t_n\leq \tcut^{\engel}(\lambda_n^{\engel})$, then the trajectory is already optimal in the Engel group, and hence is also optimal in $\cartan$. That is, we have
	\begin{equation*}
	d(\identity,q_n(t_n))= t_n.
	\end{equation*}
	In either case, the assumption that $t_n\to\infty$ implies that $d\big(\identity,q_n(t_n)\big)\to\infty$ as $n\to\infty$.
\end{proof}

Up to taking subsequences, \cref{prop-exp-is-proper} follows by combining \cref{lemma:abnormal limit} and \cref{lemma-unbdd-time}.

\section{Cut time} \label{section-cut-time}

We now have all the ingredients to verify the conjectured cut times of \cite{Sachkov:2006:dido_complete_description_of_maxwell_strata}.

\subsection{Proofs of the main theorems}

The first result is the uniqueness of geodesics for the points $\imagedomain = \{(x,y,z,v,w)\in\cartan \mid zV\neq 0\}$, where $V = xv-yw-(x^2+y^2)z/2$.

\begin{theorem}\label{thm:exp-diffeo}
$\Exp^\cartan \colon \preimagedomain \to \imagedomain$ is diffeomorphism.
\end{theorem}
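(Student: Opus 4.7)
The plan is to exhibit $\Exp^\cartan|_{\preimagedomain}$ as a proper surjective local diffeomorphism onto $\imagedomain$, and then upgrade this to a bijection by a covering-map argument. The three available ingredients are the conjugate-time bound $\mathbf{t}(\lambda)\leq\tconj(\lambda)$ of \cref{tconjC-above-tbold}, the properness \cref{prop-exp-is-proper}, and the Maxwell-set characterization \cref{lemma:inclusions}.

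The local diffeomorphism property is immediate: for any $(\lambda,t)\in\preimagedomain$ the Maxwell endpoint $(\lambda,\mathbf{t}(\lambda))$ lies in $\MAX\cup\FIX$ and is excluded from $\preimagedomain$, so $t<\mathbf{t}(\lambda)\leq\tconj(\lambda)$ and $(\lambda,t)$ is a regular point of $\Exp^\cartan$. For surjectivity, any $q\in\imagedomain$ is the endpoint of a length minimizer $\Exp^\cartan(\lambda,\cdot)|_{[0,T]}$ with $T=d(\identity,q)\leq\tcutc(\lambda)\leq\mathbf{t}(\lambda)$ by \eqref{tbold-above-tcutC}, placing $(\lambda,T)$ in $\optimalpreimage$; since $q\notin\cartan'$, \cref{lemma:inclusions} excludes $(\lambda,T)$ from $\MAX\cup\FIX$, so $(\lambda,T)\in\preimagedomain$. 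Properness is \cref{prop-exp-is-proper}. Together these make $\Exp^\cartan|_{\preimagedomain}\to\imagedomain$ a covering map of locally constant finite degree on each connected component of the target.

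To conclude it is a diffeomorphism, one has to match connected components of the source and target and show that the covering degree is $1$ on every component. The target $\imagedomain=\{zV\ne 0\}$ decomposes into four sign-sectors indexed by $\sgn(z)$ and $\sgn(V)$, while $\preimagedomain=\optimalpreimage\setminus(\MAX\cup\FIX)$ is stratified by the Maxwell and fixed-point sets of \cite{Sachkov:2006:dido_complete_description_of_maxwell_strata}. My plan is to exploit the dihedral symmetry group $G=\{\Id,\varepsilon^1,\varepsilon^2,\varepsilon^3\}$ of the exponential: $G$ permutes both stratifications coherently, so the $G$-orbit of a preimage $(\lambda,t)\in\preimagedomain$ consists of four distinct points mapping to four distinct sign-sectors of $\imagedomain$, matching each stratum of $\preimagedomain$ bijectively with one sign-sector of $\imagedomain$ and forcing degree $1$ everywhere.

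The main obstacle is this final degree-count. The three formal properties (local diffeomorphism, surjectivity, properness) are comparatively direct, but the degree argument requires a careful inventory of the connected components of $\preimagedomain$ and a verification that no accidental (non-$G$-symmetric) coincidences $\Exp^\cartan(\lambda_1,t_1)=\Exp^\cartan(\lambda_2,t_2)$ occur in $\imagedomain$. The essential tool here is again the properness of \cref{prop-exp-is-proper}, which forbids extra sheets of the covering from escaping to infinity; combined with the $G$-action and the conjugate-time bound, it reduces the degree count to a finite verification on the explicit stratification of $\optimalpreimage$ described in \cite{Sachkov:2006:dido_complete_description_of_maxwell_strata}.
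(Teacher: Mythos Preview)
Your first two paragraphs are solid: the local-diffeomorphism, surjectivity, and properness arguments are correct and essentially match the ingredients the paper uses. The gap is in the degree-one step.

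The $G$-argument you sketch does not determine the degree. If $G$ permutes the four sign-sectors of $\imagedomain$ simply transitively, then equivariance of $\Exp^\cartan$ tells you only that the covering degree is the \emph{same} on all four components, not that it equals $1$. Your claim that the $G$-orbit of a preimage ``matches each stratum of $\preimagedomain$ bijectively with one sign-sector'' presupposes exactly what you are trying to prove, namely that there are no coincidences beyond those forced by $G$. The promised ``finite verification'' is never carried out, and it is not clear how properness or the conjugate-time bound would reduce it to anything finite: the components of $\imagedomain$ have $\pi_1\cong\ZZ$ (coming from the rotation orbits), so connected covers of arbitrary degree exist, and nothing you have written rules them out.

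The paper closes this gap with a different symmetry: it uses the \emph{continuous} rotation group $S^1=\{R_\eta\}$ rather than the discrete group $G$. Since $S^1$ acts freely on both $\preimagedomain$ and $\imagedomain$, the exponential descends to a map $\quotientexp\colon\preimagedomain/S^1\to\imagedomain/S^1$ which is still proper and still has nonvanishing Jacobian. The crucial observation is that the connected components of $\imagedomain/S^1$ are \emph{simply connected}: the paper exhibits each as homeomorphic to a slice $\{x=0,\,y>0,\,z>0,\,V>0\}$, which retracts onto the convex set $\{z=2,\,w>y>0\}$. Once the target is simply connected, the Hadamard Global Diffeomorphism Theorem applies directly and no degree count is needed. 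Quotienting by $S^1$ is precisely what kills the $\ZZ$ in $\pi_1$ that blocks your approach.
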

\begin{proof}
The group $S^1$ of rotations $R_\eta$ acts freely on both $\preimagedomain$ and $\imagedomain$. Since the rotations are symmetries of the sub-Riemannian exponential $\Exp^\cartan \colon \preimagedomain \to \imagedomain$, it follows that the exponential descends to a well defined smooth map $\quotientexp \colon \preimagedomain /S^1\to\imagedomain/S^1$.
Since the action on $\imagedomain$ is free, it suffices to prove that the quotient map $\quotientexp$ is a diffeomorphism.

First, by \cref{prop-exp-is-proper}, the exponential $\Exp^\cartan \colon \preimagedomain \to \imagedomain$ is proper, so $\quotientexp$ is proper as well.
Second, by Theorem~\ref{tconjC-above-tbold},
the Jacobian of $\Exp^\cartan$ is nonzero everywhere in $\preimagedomain$, so also the Jacobian of $\quotientexp$ is everywhere nonzero.
The claim then follows by the Hadamard Global Diffeomorphism Theorem \cite[Theorem~6.2.8]{KrantzParks:2002:implicit_function_theorem} once we show that the connected components of $\imagedomain/S^1$ are simply connected.

The connected components of $\imagedomain/S^1$ are all homeomorphic to the subset
\begin{equation*}
\imagedomain_{0+} = \{(x,y,z,v,w)\in\imagedomain\mid x=0,y>0,z>0,V>0 \},
\end{equation*}
which further homotopy retracts to the level set 
\begin{equation*}
\imagedomain_{02}=\{(x,y,z,v,w)\in \imagedomain_{0+}\mid z=2\}. 
\end{equation*}
For any $(x,y,z,v,w)\in \imagedomain_{02}$, the function $V$ has the simplified expression
\begin{equation*}
V(x,y,z,v,w) = y(w-y).
\end{equation*}
That is, for points in $\imagedomain_{02}$, we have $V>0$ if and only if $w>y$. Hence $\imagedomain_{02}$ is convex and, in particular, simply connected, so the same is true for the connected components of $\imagedomain/S^1$, concluding the proof of the theorem.
\end{proof}

\cref{thm:exp-diffeo} gives the following coordinate version of \cref{theorem-unique-minimizer-region}.

\begin{theorem}\label{theorem-unique-minimizer-region-coords}
If $\qone=(x,y,z,v,w)\in\cartan$ is such that $z\neq 0$ and $x v + y w - \frac{(x^2 + y^2) z}{2}\neq 0$, then there exists a unique minimizer from $\Id$ to $\qone$.
\end{theorem}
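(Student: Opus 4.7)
The plan is to reduce the theorem to the already-established \cref{thm:exp-diffeo}. The hypothesis that $z\neq 0$ and $xv+yw-(x^2+y^2)z/2\neq 0$ says exactly that $\qone\in\imagedomain$, where $\imagedomain=\cartan\setminus\{zV=0\}$ with $V=xv+yw-(x^2+y^2)z/2$ as in \cref{lemma:inclusions}. Hence \cref{thm:exp-diffeo} immediately provides a unique pair $(\bar\lambda,\bar T)\in\preimagedomain$ with $\Exp^\cartan(\bar\lambda,\bar T)=\qone$, and the whole task reduces to showing that every length minimizer from $\Id$ to $\qone$ is parametrized by this pair.

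First, I would invoke the standard Hopf--Rinow-type existence result for complete sub-Riemannian manifolds to obtain at least one length minimizer $q\colon[0,T]\to\cartan$ with $T=d(\Id,\qone)$ and $q(T)=\qone$. Since the Cartan group admits no strictly abnormal minimizers (all abnormal trajectories are straight lines and hence simultaneously normal), $q$ must be a normal extremal, so $q(t)=\Exp^\cartan(\lambda,t)$ for some $\lambda\in C^\cartan$. Minimality of $q$ on $[0,T]$ gives $T\leq \tcut^\cartan(\lambda)\leq \mathbf{t}(\lambda)$ by \eqref{tbold-above-tcutC}, placing $(\lambda,T)\in\optimalpreimage$.

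The one delicate step is to upgrade $(\lambda,T)\in\optimalpreimage$ to $(\lambda,T)\in\preimagedomain$, i.e., to exclude the possibility $(\lambda,T)\in\MAX\cup\FIX$. But this is immediate from \cref{lemma:inclusions}: if $(\lambda,T)\in\MAX\cup\FIX$, then $\qone=\Exp^\cartan(\lambda,T)\in\cartan'=\{zV=0\}$, contradicting the hypothesis on $\qone$. With $(\lambda,T)\in\preimagedomain$ in hand, the injectivity of $\Exp^\cartan|_{\preimagedomain}$ from \cref{thm:exp-diffeo} forces $(\lambda,T)=(\bar\lambda,\bar T)$, so $q$ is uniquely determined. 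No serious obstacle arises once \cref{thm:exp-diffeo} is available; the only subtle point is this contrapositive use of \cref{lemma:inclusions} to prevent a competing minimizer from hiding in the Maxwell or fixed-point locus.
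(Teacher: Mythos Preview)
Your proof is correct and follows exactly the approach the paper intends: the paper simply states that \cref{theorem-unique-minimizer-region-coords} is the coordinate version of \cref{theorem-unique-minimizer-region} obtained from \cref{thm:exp-diffeo}, and you have carefully unpacked that deduction. The only content beyond the diffeomorphism statement is the contrapositive use of \cref{lemma:inclusions} to force any minimizing $(\lambda,T)$ into $\preimagedomain$, which you identify correctly.
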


Using \cref{thm:exp-diffeo}, we confirm the conjectured cut times of \cite{Sachkov:2006:dido_complete_description_of_maxwell_strata}, proving also \cref{theorem-cuttime-is-maxwell-time}.

\begin{theorem}\label{tcut-is-tbold}
For $\lambda \in C^\cartan$, we have
\begin{align}
\tcut^\cartan (\lambda) = \mathbf{t} (\lambda).
\end{align}
\end{theorem}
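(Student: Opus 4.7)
The plan is to combine the diffeomorphism property of \cref{thm:exp-diffeo} with the inclusion criterion of \cref{lemma:inclusions} to establish the reverse of the inequality $\tcut^\cartan(\lambda)\leq \mathbf{t}(\lambda)$ from \eqref{tbold-above-tcutC}. The cases $\lambda \in C_3^\cartan \cup C_{457}^\cartan$ are immediate from \cref{infinite-finite}, since both quantities are $+\infty$, so I focus on the three strata $\lambda \in C_1^\cartan\cup C_2^\cartan\cup C_6^\cartan$ where $\mathbf{t}(\lambda)<\infty$.

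Fix such a $\lambda$ and a time $t\in(0,\mathbf{t}(\lambda))$ such that the endpoint $q:=\Exp^\cartan(\lambda,t)$ lies in $\imagedomain$. I first check that $(\lambda,t)\in \preimagedomain$: the bound $t<\mathbf{t}(\lambda)$ places $(\lambda,t)$ in $\widehat{N}$, while $q\in \imagedomain=\cartan\setminus \cartan'$ together with \cref{lemma:inclusions} excludes $(\lambda,t)$ from $\MAX\cup \FIX$. Next, I take any length-minimizer $\tilde{q}\colon [0,\tilde{t}]\to \cartan$ from $\Id$ to $q$, which exists by completeness of the Carnot group and has $\tilde{t}=d(\Id,q)\leq t$. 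Since all abnormal geodesics in $\cartan$ are simultaneously normal, $\tilde{q}(s)=\Exp^\cartan(\tilde{\lambda},s)$ for some $\tilde{\lambda}\in C^\cartan$; minimality and \eqref{tbold-above-tcutC} give $\tilde{t}\leq \tcut^\cartan(\tilde{\lambda})\leq \mathbf{t}(\tilde{\lambda})$, and the identity $\Exp^\cartan(\tilde{\lambda},\tilde{t})=q\in \imagedomain$ once again triggers \cref{lemma:inclusions} to place $(\tilde{\lambda},\tilde{t})$ in $\preimagedomain$. Injectivity from \cref{thm:exp-diffeo} then forces $(\tilde{\lambda},\tilde{t})=(\lambda,t)$, so $t=\tilde{t}=d(\Id,q)$ and the extremal $s\mapsto \Exp^\cartan(\lambda,s)$ is minimizing on $[0,t]$.

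The remaining times $t\in(0,\mathbf{t}(\lambda))$ with $\Exp^\cartan(\lambda,t)\in \cartan'$ are handled by a density argument. Since $\lambda \in C_1^\cartan\cup C_2^\cartan\cup C_6^\cartan$ yields a non-abnormal trajectory, the real-analytic functions $z(s)$ and $V(s)$ are not identically zero along it, so the set $\{s : z(s)V(s)=0\}$ is discrete. One can therefore approximate $t$ by times $t_n\nearrow t$ with $\Exp^\cartan(\lambda,t_n)\in \imagedomain$; the previous paragraph yields optimality on each $[0,t_n]$, and by continuity of the distance function this passes to optimality on $[0,t]$. Hence the extremal is minimizing on $[0,t]$ for every $t<\mathbf{t}(\lambda)$, which gives $\tcut^\cartan(\lambda)\geq \mathbf{t}(\lambda)$ and completes the equality. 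The main subtlety I expect is making sure that both the extremal of interest $(\lambda,t)$ and any competing minimizer $(\tilde{\lambda},\tilde{t})$ land in the common stratum $\preimagedomain$ where \cref{thm:exp-diffeo} is available — a dovetailing that \cref{lemma:inclusions} is precisely designed to provide.
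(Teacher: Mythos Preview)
Your proof is correct and follows essentially the same strategy as the paper: use \cref{lemma:inclusions} to place both the given extremal and any competing minimizer in $\preimagedomain$, then invoke the injectivity of \cref{thm:exp-diffeo}. Your version is in fact a bit more explicit, spelling out the competing-minimizer step that the paper compresses into the single line ``$\nu=(\lambda,t)$ is the unique solution to $\Exp(\nu)=q$ among all the potentially optimal geodesics $\nu\in\optimalpreimage$.''

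The only substantive difference is in the density argument. The paper works just below $\mathbf{t}(\lambda)$ and invokes the explicit $\FIX$ equation $\sn\tau\cn\tau=0$ (valid for $\lambda\in C_1^\cartan\cup C_2^\cartan$) to find nearby times outside $\MAX\cup\FIX$, then handles $C_6^\cartan$ separately by continuity of $\mathbf{t}$ from $C_2^\cartan$. You instead argue that the zero set of $z(s)V(s)$ is discrete by real-analyticity, which lets you treat $C_6^\cartan$ uniformly. This is fine, but the inference ``non-abnormal $\Rightarrow$ $z\not\equiv 0$ and $V\not\equiv 0$'' is not automatic; it is true, but you should justify it, e.g.\ by noting that $\mathbf{t}_i^z$ and $\mathbf{t}_i^V$ are defined as \emph{first positive roots} of $z$ and $V$ (so neither can be identically zero on these strata), or by appealing to the explicit parametrizations.
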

\begin{proof}
The case $\lambda \in C_3^\cartan \cup C_{457}^\cartan$ when $\mathbf{t} (\lambda) = + \infty$ follows by equation~(\ref{tcutC-above-tcutE}). 

If $\lambda \in C_1^\cartan \cup C_2^\cartan$, then we have a finite $\mathbf{t}(\lambda) \in (0, \infty)$ with $(\lambda, \mathbf{t}(\lambda)) \in \MAX \cup \FIX$. Points $(\lambda,t)\in\FIX$ are described by the equation $\sn \tau \cn \tau = 0$ \cite{Sachkov:2006:dido_complete_description_of_maxwell_strata}, where $\tau$ is given by
\begin{align*}
&\lambda \in C_1^{\cartan} && \Rightarrow && \tau = \sqrt{\alpha} \Big(\varphi + \frac{t}{2}\Big), \\
&\lambda \in C_2^{\cartan} && \Rightarrow && \tau = \frac{\sqrt{\alpha}}{k} \Big(\varphi + \frac{t}{2}\Big).
\end{align*}
Since the zeros of the equation are isolated with respect to $t$, there exists $\epsilon>0$, s.t. $(\lambda, t) \notin \MAX \cup \FIX$ for all $t\in\big(\mathbf{t}(\lambda)-\epsilon, \mathbf{t}(\lambda)\big)$. 

Therefore, for such $t$, we have $q=\Exp^\cartan (\lambda,t) \in \imagedomain$.
By \cref{lemma:inclusions} and \cref{thm:exp-diffeo}, $\nu=(\lambda, t)$ is the unique solution to $\Exp(\nu) = q$ among all the potentially optimal geodesics $\nu\in \optimalpreimage$.
By continuity, $(\lambda,\mathbf{t}(\lambda))$ is optimal. 

The case $\lambda \in C_6^{\cartan}$ follows from the case $\lambda \in C_2^{\cartan}$, since $\mathbf{t}(\lambda)$ is continuous for $\lambda\in C_2^{\cartan}\cup C_6^{\cartan}$ by \cite[Proposition 3.4]{Sachkov:2006:dido_complete_description_of_maxwell_strata}.
\end{proof}

\begin{proof}[Proof of \cref{theorem-cuttime-comparison}]
	By \cref{tcut-is-tbold}, $\tcut^\cartan=\mathbf{t}$.
	Hence by \cref{lemma:engel cut time comparison}, the inequality
	\begin{equation}\label{eq-cuttime-inequality}
	\tcut^{\engel}(\lambda^\engel)\leq \tcutc(\lambda^\cartan) \leq \zeta\cdot \tcute(\lambda^\engel)
	\end{equation}
	holds for all $\lambda^\cartan\in C_1^\cartan\cup C_2^{\cartan}$, where $\lambda^\engel= \bpiE(\lambda^\cartan)$.
	
	By continuity of $\mathbf{t}$ on $C_1^\cartan\cup C_2^{\cartan}\cup C^\cartan_6$, inequality \eqref{eq-cuttime-inequality} extends also to $\lambda^\cartan\in C_6^\cartan$. On the other hand, by \cref{infinite-finite}, $\tcutc(\lambda^\cartan) = \tcut^{\engel}(\lambda^\engel) = \infty$ elsewhere, so the claim holds for all $\lambda^\cartan\in C^\cartan$.
\end{proof}

\subsection{Properties of the Cartan and Engel cut times}

\begin{corollary}\label{Cartan-invariant-properties}
	The function $\tcut^\cartan\colon C^\cartan \to (0,+\infty]$ has the following properties:
	\begin{itemize}
		\item $\tcut^\cartan(\lambda)$ depends only on the Casimirs $E$, $\alpha$. In particular, the Cartan group is equioptimal.
		\item $\tcut^\cartan(\lambda)$ is homogeneous with respect to dilations: 
		\begin{equation*}
		\tcut^\cartan\circ \delta_\mu (\lambda) = \mu\cdot \tcut^\cartan(\lambda)\quad\text{for all }\lambda\in C^\cartan.
		\end{equation*}
	\end{itemize}
\end{corollary}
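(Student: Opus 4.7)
The plan is to reduce both claims to the explicit stratum-by-stratum formulas for $\mathbf{t}(\lambda)$ listed in \cref{subsec-cartan}, after invoking \cref{tcut-is-tbold} to replace $\tcut^\cartan$ by $\mathbf{t}$. Since the excerpt has already identified $E$ and $\alpha$ as the natural parameters through which the normalized Maxwell times $\mathbf{t}_1(k)$ and $\mathbf{t}_2(k)$ enter, this reduces the corollary to careful bookkeeping across the stratum decomposition $C^\cartan = C_1^\cartan\cup\dots\cup C_7^\cartan$.

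For the Casimir dependence, I would inspect each stratum in turn. On $C_1^\cartan\cup C_2^\cartan$ the modulus $k$ is defined explicitly from $E$ and $\alpha$, and the renormalization factors $\mu_\lambda\in\{\sqrt{\alpha}/(4K(k)),\ \sqrt{\alpha}/(2kK(k))\}$ are built only from $\alpha$ and $k$, so $\mathbf{t}=\mathbf{t}_i(k)/\mu_\lambda$ depends on $(E,\alpha)$ alone. On $C_6^\cartan$ we have $\alpha=0$ and $E=c^2/2$, whence $\mu_\lambda=\abs{c}/(2\pi)=\sqrt{2E}/(2\pi)$ is again determined by the Casimirs. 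On the remaining strata $C_3^\cartan\cup C_{457}^\cartan$, \cref{infinite-finite} gives $\mathbf{t}\equiv\infty$, which is trivially a function of $(E,\alpha)$. Equioptimality then drops out from the standard observation that the time-shifted geodesic $q_\varphi(t)=q(t+\varphi)$, after left-translation back to the identity, is parametrized by a covector $\lambda_\varphi$ with unchanged $(\alpha,\beta)$ and with $(\theta,c)$ advanced by time $\varphi$ along the pendulum flow \eqref{eq:pendulum}. Because $\alpha$ and $E=c^2/2-\alpha\cos(\theta-\beta)$ are first integrals of that flow, the Casimirs of $\lambda_\varphi$ coincide with those of $\lambda$, and the first bullet yields $\tcut^\cartan(q_\varphi)=\tcut^\cartan(q)$.

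For the homogeneity claim, I would track the action of $\delta_\mu$ on the relevant quantities. The prescribed rules $\alpha\mapsto\alpha/\mu^2$ and $c\mapsto c/\mu$ immediately give $E\mapsto E/\mu^2$, so $k$ is invariant under dilations in each of $C_1^\cartan$ and $C_2^\cartan$, while a direct substitution yields $\mu_\lambda\mapsto\mu_\lambda/\mu$ in each of $C_1^\cartan$, $C_2^\cartan$, and $C_6^\cartan$. This produces
\begin{equation*}
\mathbf{t}(\delta_\mu\lambda)=\frac{\mathbf{t}_i(k)}{\mu_{\delta_\mu\lambda}}=\mu\cdot\mathbf{t}(\lambda),
\end{equation*}
and the identity is automatic on the strata where $\mathbf{t}\equiv\infty$.

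Since \cref{tcut-is-tbold} has already supplied the substantive content, I do not expect any genuine obstacle in this proof. The only point that merits explicit care is the identification that left-translating a time-shifted extremal to the identity really corresponds to flowing $(\theta,c)$ by the pendulum \eqref{eq:pendulum}, as this is the precise mechanism that converts Casimir-invariance of $\mathbf{t}$ into equioptimality in the sense of the defining definition.
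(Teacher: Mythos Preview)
The paper states this corollary without proof, relying on the reader to see it as an immediate consequence of \cref{tcut-is-tbold} together with the explicit formulas for $\mathbf{t}$ recalled in \cref{subsec-cartan}. Your proposal carries out precisely that implicit verification, and each step is correct: the stratum-by-stratum check that $\mathbf{t}$ depends only on $(E,\alpha)$, the observation that shifting the starting time of a geodesic amounts to flowing $(\theta,c)$ by the pendulum \eqref{eq:pendulum} with $(\alpha,\beta)$ fixed, and the direct computation of the effect of $\delta_\mu$ on $k$ and $\mu_\lambda$. There is nothing to add.
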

\begin{remark}
	The function $\tcut^\engel(\lambda)$ has the same properties.
\end{remark}

Now consider the functions $\tcut^\cartan(\lambda^\cartan)$, $\tcut^\engel(\lambda^\engel)$ for the most general cases when $\lambda^\cartan = (\varphi, k, \alpha, \beta) \in C_i^\cartan, \lambda^\engel = (\varphi, k, \alpha) \in C_i^\engel, i = 1,2$. Both functions depend only on the parameters $k, \alpha$, where $k$ determines the shape of the elastica on the plane $(x,y)$ and the parameter $\alpha$ changes the size of the elastica. Normalizing the full period of the elastica to unit length by the dilations $\delta_{\mu_\lambda}$, we compare the corresponding cut times $\mu \tcut^{\engel} (k), \mu \tcut^{\cartan} (k)$ for the problems on the Engel and Cartan groups, see Fig.~\ref{tcutC12}. The corresponding optimal elasticae are shown in \cref{fig-optimal-family}.

\begin{remark}
Numerical calculations show that the optimal bounds for the constant $\zeta$ in \cref{lemma:engel cut time comparison} are given by
	\begin{align*}
	\lambda  &\in C_1^{\cartan} & &\Rightarrow& \tcut^{\cartan}(\lambda^\cartan)&\leq \mathbf{t}_1^z (0) \cdot\tcut^{\engel}(\lambda^\engel)\\
	\lambda  &\in C_2^{\cartan}\cup  C_6^\cartan & &\Rightarrow& \tcut^{\cartan}(\lambda^\cartan)&\leq \mathbf{t}_2^V (0) \cdot\tcut^{\engel}(\lambda^\engel),
	\end{align*}
with $\lambda^\engel:= \bpiE (\lambda^\cartan)$, where $\mathbf{t}_1^z (0) \approx 1.430$ is the first positive root $t=\mathbf{t}_1^z (0)$ of $\sin (\pi t) = \pi t \cos(\pi t)$; 
the value $\mathbf{t}_2^V (0) \approx 1.465$ is the first positive root $t=\mathbf{t}_2^V (0)$ of
\begin{equation*}
\cos (\pi t) \big(2 \pi^2 t^2 - \sin^2(\pi t)\big) = \pi t \sin (\pi t).
\end{equation*}
Note that $\mathbf{t}_2^V(0)$ is the cut time for the Cartan geodesics projecting to the circle with unit circumference.
\end{remark}

\begin{figure}[htbp]
\includegraphics[width=0.49\textwidth]{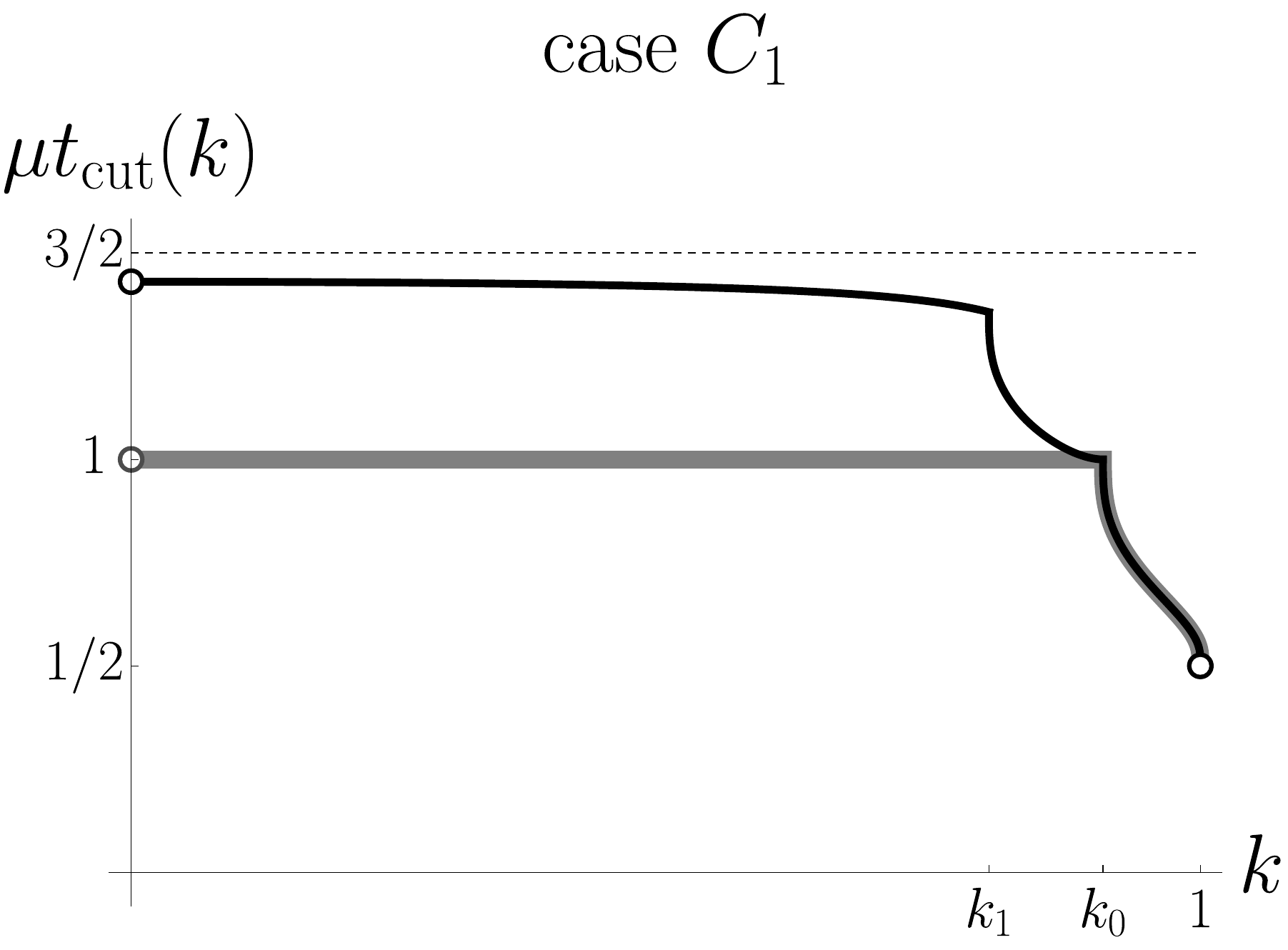}
\hfill
\includegraphics[width=0.49\textwidth]{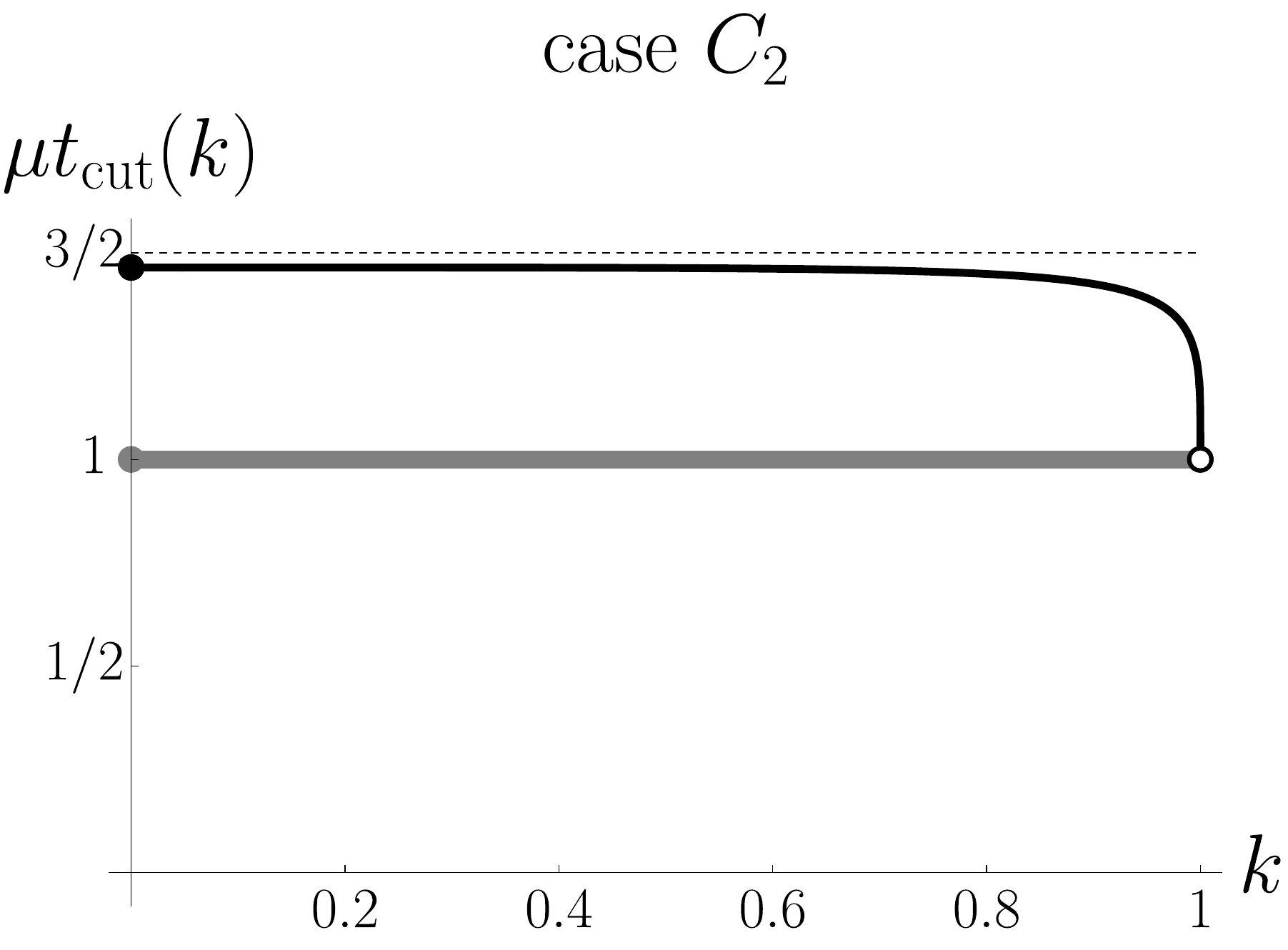}
\\
\caption{Normalized cut times $\tcut^\engel(k)$ (gray), $\tcut^\cartan(k)$ (black) for general types of elasticae: inflectional (left); non-inflectional (right) with circle when $k=0$}
\label{tcutC12}
\end{figure}
\section{Open questions} \label{section-open-questions}
Our work opens three immediate avenues of further research.

Our study reduces the boundary problem~(\ref{sys})--(\ref{integralSR}) in the general situation of the Cartan case when $\qone \in \imagedomain$ to finding the unique root $(\lambda, t) \in \preimagedomain$ of the five-dimensional system of equations $\Exp^\cartan(\lambda, t) = \qone$. Using the continuous symmetries, it is possible to reduce the number of equations of the system to three. Software for solving a similar three-dimensional system of equations is described in~\cite{MashtakovArdentovSachkov:2013:Software}.
By means of nilpotentization, such a software is useful for approximate solving of sub-Riemannian problems with growth vector $(2,3,5)$. An iterative algorithm based on nilpotent approximation was developed in~\cite{Mashtakov:2012:Algorithms} to find the approximate solution of a generic (2,3,5)-problem and applied for two such problems: the plate-ball problem and suboptimal control of a wheeled robot with two passive off-hooked trailers. 
See also \cite{Ardentov:2016:ControllingNilpotent} and \cite{ArdentovMashtakov:2021:ControlNilpotent} for suboptimal control of a robot with a single trailer via nilpotent approximation with the sub-Riemannian Engel group.

We can also conclude that the cut locus in the sub-Riemannian problem on the Cartan group $\Cut \subset \cartan$ lies in the domain of fixed points of the symmetries $\varepsilon^1, \varepsilon^2$, i.e., $\Cut \subset \cartan'$. However, a complete description of the cut locus and the multiplicity of the solutions for $\qone\in \cartan'$ remains unknown and requires a separate investigation. 

The study of the corresponding sub-Riemannian spheres and their singularities are of interest to specialists in various fields of mathematics. Numerical evidence allows us to suggest that the sub-Riemannian distance and the spheres are not subanalytic in the Cartan case similarly to the flat Martinet and Engel cases \cite{AgrachevBonnardChybaKupka:1997:martinet,ArdentovSachkov:2015:CutEngel}.\appendix
\section{Formulas for the first Maxwell times}\label{appendix-formulas}
Here we give the relevant formulas from~\cite{Sachkov:2006:dido_complete_description_of_maxwell_strata} used in~\cref{section-known-results}. Additionally, we formulate and prove the technical~\cref{lemma-f2v-limit-as-k-to-1} required for the proof of properness.

In the case $\lambda \in C_1^\cartan$, the first Maxwell times $\mathbf{t}_1^z(k) = \frac{p_1^z(k)}{2 K(k)}$, $\mathbf{t}_1^V(k) = \frac{p_1^V(k)}{2 K(k)}$ corresponding to the symmetries $\varepsilon^1, \varepsilon^2$ respectively are defined by the first positive roots $p=p_1^z(k), p=p_1^V(k)$ of the equations $f_1^z(p,k)=0$,  $f_1^V(p,k)=0$,  where
\begin{align*}
f_1^z(p,k) =&\sn p \dn p - g_1(p) \cn p, \qquad g_1(p) = 2 \E(p) - p,\\
f_1^V(p,k) =& \frac{4}{3} \dn p \sn p \Big(g_1^3(p) - p - 2 g_1(p) \big(1 - (2 - 6 \cn^2 p) k^2\big)  \\
&+ 8 k^2 \sn p \cn p \dn p\Big) + 4 \cn p\; g_1^2(p) (1 - 2 k^2 \sn^2 p),
\end{align*}
$\sn$, $\cn$, $\dn$ are Jacobi elliptic functions; $\E(p)$ is the composition of the incomplete elliptic integral of the second kind with the elliptic amplitude (the inverse function to the incomplete elliptic integral of the first kind). We do not write the second parameter (elliptic modulus) for short, since it always coincides with $k$ for every function. 

We define the function $\mathbf{t}_1^z(k)$ at $k=0$ as 
\begin{equation*}
\mathbf{t}_1^z(0)=\lim_{k\to +0} \mathbf{t}_1^z(k) = \frac{p_1^z(0)}{\pi}, 
\end{equation*}
where $p=p_1^z(0)$ is the first positive root of the equation
\begin{equation*}
f_1^z(p,0) = \sin p - p \cos p = 0.
\end{equation*}

In the case $\lambda \in C_2^\cartan$, the first Maxwell time $\mathbf{t}_2^V(k) = \frac{p_2^V(k)}{K(k)}$ corresponding to the symmetry $\varepsilon^2$ is defined by the first positive root $p=p_2^V(k)$ of the equation $f_2^V(p,k)=0$, where
\begin{align}
f_2^V(p,k) =& \frac{4}{3} \bigg(\dn p \big(8 k^2 \cn^2 p  \sn^2 p + g_2^2(p) (3 - 6 \sn^2 p)\big) + \cn p \sn p  \nonumber \\
&\times \Big(g_2^3(p) - k^4 p - 2 g_2(p) \big(4 + k^2 (1 - 6 \sn^2 p)\big)\Big)\bigg), \label{eq:f_V in C_2}\\
g_2(p) =& 2\E(p)-(2-k^2)p. \nonumber
\end{align}

In the case $\lambda \in C_6^\cartan$, the first Maxwell time $\mathbf{t}_2^V(0) = \frac{2 p_2^V(0)}{\pi}$ corresponding to the symmetry $\varepsilon^2$ is defined by the first positive root $p=p_2^V(0)$ of equation $f_2^V(p,0)=0$, where
\begin{align*}
f_2^V(p,0) =& \frac{1}{512}\big((32 p^2 - 1) \cos(2 p) - 8 p \sin(2 p) + \cos(6 p)\big).
\end{align*}

\begin{lemma}\label{lemma-f2v-limit-as-k-to-1}
	There exists a value $\tilde{k}<1$ such that $f_2^V(p,k)$ has a root in the interval $p\in (K(k),\frac{3}{2}K(k)]$ for all $\tilde{k}<k<1$. In particular, $\mathbf{t}_2^V(k)\leq \frac{3}{2}$ for all $\tilde{k}<k<1$.
\end{lemma}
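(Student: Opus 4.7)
The plan is to apply the intermediate value theorem to $f_2^V(\,\cdot\,,k)$ on the interval $[K(k), K(k)+1]$, exploiting that $K(k)+1 \leq \tfrac{3}{2}K(k)$ once $k$ is close enough to $1$ that $K(k)\geq 2$. The argument rests on the Jacobi shift formulas $\sn(K+u) = \cn u/\dn u$, $\cn(K+u) = -k'\sn u/\dn u$, $\dn(K+u) = k'/\dn u$, and $\E(K+u) = E(k) + \E(u) - k^2 \sn u \cn u/\dn u$, where $k'=\sqrt{1-k^2}$.

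First I would evaluate $f_2^V$ at $p = K(k)$. Using $\sn K = 1$, $\cn K = 0$, $\dn K = k'$, the expression \eqref{eq:f_V in C_2} collapses to
\[
f_2^V(K(k),k) = -4\,k'\,g_2(K(k))^2.
\]
Since $g_2(K(k)) = 2E(k) - (2-k^2)K(k) \to -\infty$ as $k \to 1$, this is strictly negative for $k$ sufficiently close to $1$.

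Next I would show that $f_2^V(K(k)+1, k) > 0$ for $k$ close to $1$. Fixing $u=1$ and letting $k \to 1$, the shift formulas together with $\sn(1,k)\to\tanh 1$, $\cn(1,k),\dn(1,k)\to\operatorname{sech} 1$, and $\E(1,k)\to\tanh 1$ yield $\sn(K+1)\to 1$, $\cn(K+1)\sim -k'\sinh 1$, $\dn(K+1)\sim k'\cosh 1$, and $g_2(K(k)+1) = -K(k) + O(1)$. Substituting into \eqref{eq:f_V in C_2} and tracking the leading order in $K(k)$ (every summand carries at least one factor of $k'$ from $\dn(K+1)$ or $\cn(K+1)$, and the top power of $K(k)$ comes from the $g_2^3$ contribution) gives
\[
f_2^V(K(k)+1, k) = \tfrac{4}{3}\,k'\sinh(1)\cdot K(k)^3 \cdot (1+o(1)),
\]
which is strictly positive for $k$ close to $1$.

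Combining the two sign computations, the intermediate value theorem supplies a root $p^*\in (K(k), K(k)+1]$ of $f_2^V(\,\cdot\,,k)$ for $k$ close to $1$. Choosing $\tilde{k}$ also large enough that $K(\tilde{k})\geq 2$, we obtain $p^*\in (K(k), \tfrac{3}{2}K(k)]$. Since $p_2^V(k)$ is the first positive root of $f_2^V(\,\cdot\,,k)$ and strictly exceeds $K(k)$ (as $\mathbf{t}_2^V(k)>1$ by \cite[Proposition~3.2]{Sachkov:2006:dido_complete_description_of_maxwell_strata}), we get $p_2^V(k)\leq p^*\leq \tfrac{3}{2}K(k)$, i.e., $\mathbf{t}_2^V(k)\leq \tfrac{3}{2}$. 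The main obstacle is the second step: both sides of the leading-order asymptotic tend to $0$ as $k\to 1$, so one must check that no subleading contribution (for instance from cross terms involving $g_2^2$ and $\cn^2 p$) can reverse the sign of the isolated positive top term. This reduces to a careful bookkeeping of the shift formulas combined with the standard $k\to 1$ expansions of $E(k)$, $K(k)$, and Jacobi elliptic functions at the fixed input $u=1$.
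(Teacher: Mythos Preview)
Your argument is correct and follows the same intermediate value theorem strategy as the paper, but with a different choice of right endpoint: you evaluate at $p=K(k)+1$, whereas the paper evaluates directly at $p=\tfrac{3}{2}K(k)$. Both approaches first use the negativity of $f_2^V(K(k),k)$ (the paper cites \cite[Equation~(18)]{Sachkov:2006:dido_complete_description_of_maxwell_strata}; your explicit computation $f_2^V(K,k)=-4k'g_2(K)^2$ recovers this). For the second sign, the paper exploits the closed-form values $\dn(\tfrac{3}{2}K)=(1-k^2)^{1/4}$ and $\cn(\tfrac{3}{2}K)=-(1-k^2)^{1/4}/\sqrt{1+\sqrt{1-k^2}}$ and then bounds each summand of \eqref{eq:f_V in C_2} by elementary inequalities, isolating a dominant positive term of order $p^3\dn p$. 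Your route via the shift formulas at the fixed offset $u=1$ is equally valid and arguably a bit slicker, since the limits $\sn(1,k)\to\tanh 1$, $\cn(1,k),\dn(1,k)\to\operatorname{sech}1$ make the asymptotics transparent; the bookkeeping you flag as the main obstacle is indeed routine, as the only subleading term of comparable size is $\dn p\cdot g_2^2(p)(3-6\sn^2 p)\sim -3k'\cosh(1)K^2$, which is dominated by your $k'\sinh(1)K^3$ once $K$ is large. A small bonus of your endpoint choice is that it actually yields the sharper asymptotic bound $\mathbf{t}_2^V(k)\leq 1+1/K(k)\to 1$, though only the cruder $\tfrac{3}{2}$ is needed here.
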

\begin{proof}
	In \cite[Equation~(18)]{Sachkov:2006:dido_complete_description_of_maxwell_strata}, it is shown that $f_2^V(K(k),k)<0$ for all $0<k<1$. Therefore, it suffices to show that there exists some $\tilde{k}<1$ such that $f_2^V(\frac{3}{2}K(k),k)\geq 0$ for all $k>\tilde{k}$.
	
	We consider the asymptotics of expression~\eqref{eq:f_V in C_2} as $k\to 1$ when $p=p(k)=\frac{3}{2}K(k)\to\infty$.
	Since $\E(p)\to 1$ as $k\to 1$, there exists some large enough $\tilde{k}<1$ such that, whenever $\tilde{k}<k<1$, we have the bounds
	\begin{equation*}
	-2p\leq g_2(p) \leq -\frac{1}{2}p.
	\end{equation*}
	Note that for all $0<k<1$, we also have
	\begin{align}\label{eq-elliptic-signs}
	&\cn p<0,&
	&0<\sn p<1,&
	&0<\dn p.
	\end{align}
	
	Using the above estimates, we obtain bounds for the various parts of expression~\eqref{eq:f_V in C_2}. Namely, for all $\tilde{k}<k<1$, we have
	\begin{align*}
	8 k^2 \cn^2 p  \sn^2 p &\geq 0,\\
	g_2^2(p) (3 - 6 \sn^2 p) & \geq -12p^2,\\ 
	g_2^3(p) - k^4 p & \leq -\frac{1}{8}p^3,\\
	-2 g_2(p) (4 + k^2 (1 - 6 \sn^2 p)) & \leq 20p.
	\end{align*}
	With the above four inequalities along with the sign information of \eqref{eq-elliptic-signs}, we deduce the lower bound
	\begin{equation}\label{eq-f2v-lower-bound}
	f_2^V(p,k) \geq -16p^2\dn p
	- \frac{1}{6}p^3\cn p \sn p
	+ \frac{80}{3}p\cn p \sn p.
	\end{equation}
	
	For $p=\frac{3}{2}K(k)$, we have the explicit expressions
	\begin{align*}
	\dn p &= (1-k^2)^{1/4},\\
	\cn p &= -\frac{(1-k^2)^{1/4}}{\sqrt{1+\sqrt{1-k^2}}}
	= -\frac{\dn p}{\sqrt{1+\dn^2 p}}.
	\end{align*}
	Since $\dn p\to 0$, $\sn p\to 1$ and $p\to\infty$ as $k\to 1$, we may further assume (increasing $\tilde{k}$ if necessary) that, for $\tilde{k}<k<1$, we have the additional bounds
	\begin{align*}
	\frac{1}{2}\dn p \leq -\cn p &\leq \dn p,\\
	\frac{1}{2} \leq \sn p &\leq 1,\\
	\frac{1}{24}p^3 - 16p^2 - \frac{80}{3}p&\geq 0.
	\end{align*}
	With these extra conditions, we conclude from \eqref{eq-f2v-lower-bound} that
	\begin{align*}
	f_2^V(p,k) &\geq - 16p^2\dn p +\frac{1}{12}p^3 \dn p \sn p-\frac{80}{3}p\dn p\sn p
	\\&\geq \dn p \Big(\frac{1}{24}p^3 - 16p^2 - \frac{80}{3}p \Big)
	\geq 0
	\end{align*}
	for all $\tilde{k}<k<1$.
\end{proof}

\bibliographystyle{amsalpha}
\bibliography{biblio}
\end{document}